\newcommand{\R}{\mathbb{R}} 
\newcommand{\N}{\mathbb{N}} 
\newcommand{\cM}{\mathcal{M}} 
\newtheorem{theorem}{Theorem}
\newtheorem{lemma}{Lemma}
\newtheorem{proposition}{Proposition}
\newtheorem{remark}{Remark}
\newcommand{\revise}[1]{{\color{black}#1}}
\newcommand{\didier}[1]{{\color{red} #1}}
\newcommand{\adrien}[1]{{\color{magenta} #1}}
\newcommand{\proscal}[2]{\left\langle#1\:,#2\right\rangle} 
\newcommand{\RR}{\mathbb{R}}
\newcommand{\NN}{\mathbb{N}}
\title{\bf Slow convergence of the moment-SOS hierarchy for an elementary polynomial optimization problem}
\begin{document}

\author{Didier Henrion$^{1,2}$, Adrien Le Franc$^{1}$, Victor Magron$^{1,3}$}

\footnotetext[1]{CNRS; LAAS; Universit\'e de Toulouse, 7 avenue du colonel Roche, F-31400 Toulouse, France. }
\footnotetext[2]{Faculty of Electrical Engineering, Czech Technical University in Prague,
Technick\'a 2, CZ-16626 Prague, Czechia.}
\footnotetext[3]{Institute of Mathematics of Toulouse, 118 route de Narbonne, F-31062 Toulouse, France.}

\date{Draft of \today}

\maketitle

\begin{abstract}
We describe a parametric univariate quadratic optimization problem for which the moment-SOS hierarchy has finite but increasingly slow convergence when the parameter tends to its limit value. We estimate the order of finite convergence as a function of the parameter. 
\end{abstract}

\section{Introduction}

The moment-SOS (sum of squares) hierarchy proposed in \cite{l01} is a powerful approach for solving globally non-convex polynomial optimization problems (POPs) at the price of solving a family of convex semidefinite optimization problems (called moment-SOS relaxations) of increasing size, controlled by an integer, the relaxation order. Under standard assumptions, the resulting sequence of bounds on the global optimum converges asymptotically, i.e., when the relaxation order goes to infinity \cite{hkl20,h23,n23}. We say that {\it convergence is finite} if the bound matches the global optimum at a finite relaxation order, i.e., solving the relaxation actually solves the POP globally. 
In this case, we also say that the {\it relaxation is exact}. 
It is known that convergence is finite generically \cite{nie2014optimality}, which means that POPs for which convergence is not finite
are exceptional. 
In the specific case of minimizing a univariate polynomial of degree $d$ over either the real line, a given closed interval or $[0, \infty)$, finite convergence systematically occurs at the minimal relaxation order $\lceil d/2 \rceil$ as a consequence of \cite{fekete35,ps76} (see also \cite[~\S~2.3]{l09} for a modern exposition) if the interval is described with a proper set of linear/quadratic inequality constraints. 
In addition to finite convergence occurring systematically in such univariate POPs, one can even bound the running time required to compute the resulting SOS decompositions when the input data involve rational coefficients \cite{mag19,mag22}. 

Restricting our attention to POPs on compact sets,
several elementary examples without finite convergence are known. 
%
%
A bivariate example is $\min_{x \in \R^2} (1-x^2_1)(1-x^2_2) \:\mathrm{s.t.}\: 1-x^2_1\geq 0, 1-x^2_2 \geq 0$, see \cite[Prop. 29]{bs23}. Many results on the speed of convergence of the moment-SOS hierarchy are now available, see, e.g., \cite{b22,s22}. However, they are practically not useful since the rates are asymptotic, for very large values of the relaxation order. In practice, when implementing the moment-SOS hierarchy on low-dimensional benchmark POPs, we observe a finite and fast convergence of the hierarchy, see, e.g., the original experiments carried out in \cite{hl03}. To the best of the authors' knowledge, almost nothing is known about the speed of convergence for small relaxation orders, when the convergence is finite.

The contribution of this note is to describe and study the elementary bounded univariate quadratic POP
\[
\begin{array}{rl}
\min_{x \in \R} & x \\
\mathrm{s.t.} &1-x^2 \geq 0 \\
 & x+(1-\varepsilon)x^2 \geq 0 ,\\
\end{array}
\]
parametrized by a scalar $\varepsilon \in [0,1]$, such that the convergence of the moment-SOS hierarchy is finite, but arbitrarily slow, when $\varepsilon$ tends to zero. 
{The feasible set is the segment $[0, 1]$ for every $\varepsilon > 0$. When $\varepsilon=0$, the feasible set is the union of the segment $[0,1]$ and the point $-1$.}
We estimate the order of finite convergence as a function of $\varepsilon$. 

Our contributions can be summarized as follows:
\begin{itemize}

\item Finite convergence holds for all $\varepsilon \in [0, 1]$ 
({Proposition}~\ref{prop:finitecvg}) and there exist steps / threshold values $\varepsilon_d \in [0,1]$
    for the exactness of the relaxation of order $d$
    ({Proposition}~\ref{th:varepsilon_d});

\item 
We provide lower and upper bounds on $\varepsilon_d$ to estimate the order at which finite convergence occurs (Theorem~\ref{th:bounds}).

\end{itemize}

Interestingly, this implies that we are able to generate simple univariate POPs with arbitrarily large {finite} convergence orders.

\section{POP design}
\label{sec:popdesign}
In this section we explain how our parametric univariate quadratic POP is designed.

\subsection{Circle and two lines}


We {introduce} the following toy problem, which turns out to be a bivariate quadratic POP:
\begin{equation}\label{pop}
\begin{array}{rrl}
v^*(\varepsilon) = & \min_{x \in \R^2} & x_1 \\
& \mathrm{s.t.} & x^2_1 + x^2_2 = 1 \\
& & 1-\varepsilon + x_1 - (1-\varepsilon) x_2 \geq 0 \\
& & 1-\varepsilon + x_1 + (1-\varepsilon) x_2 \geq 0, \\
\end{array}
\end{equation}
where $\varepsilon \in [0,1]$ is a given parameter. 
This toy program is motivated by sparse optimization
problems involving the $l_0$ pseudonorm: it was recently observed that
such problems are well-structured when projected onto the unit sphere, as 
$l_0(x) = \mathcal{L}_0(\frac{x}{||x||_2})$, where 
$\mathcal{L}_0$ is a proper lower semicontinuous convex function,
introduced in \cite{chancelier2021hidden}.
Consequently, such problems can be reformulated as convex programs
over the unit sphere, where $\mathcal{L}_0$ --- hence $l_0$ --- can be approximated
as closely as desired by a polyhedral function \cite{le2022}. 

{For every $\varepsilon > 0$,} the non-convex feasible set of~\eqref{pop} is the half circle $\{x \in \R^2 : x^2_1+x^2_2 = 1, x_1 \geq 0\}$. 
{The first and second affine inequality constraints are tight for $x \in \{(-1+\varepsilon,0), (0,1)\}$ and $x \in \{(-1+\varepsilon,0), (0,-1)\}$, respectively.}
Geometrically it follows that $v^*(\varepsilon) = 0$ for all $\varepsilon \in (0,1]$ and $v^*(0)=-1$, so that the value function is lower semi-continuous, with a discontinuity at $0$. 

\subsection{Circle and parabola}
\label{sec:poppara}
Let us replace the two affine constraints in POP \eqref{pop} with a parabolic inequality constraint {that is tight} for $x \in \{(-1+\varepsilon,0),(0,1),(0,-1)\}$:
\begin{equation}\label{poppara}
\begin{array}{rrl}
v^*(\varepsilon) = & \min_{x \in \R^2} & x_1 \\
& \mathrm{s.t.} & x^2_1 + x^2_2 = 1 \\
& & 1-\varepsilon + x_1 - (1-\varepsilon) x^2_2 \geq 0, \\
\end{array}
\end{equation}
where $\varepsilon \in [0,1]$ is a given parameter. 
The value function $v^*(\varepsilon)$ is unchanged. 

\if{
\begin{table}
\begin{center}
\begin{tabular}{r|rrrr|r}
$\log1/\varepsilon$&$v_1(\varepsilon)$&$v_2(\varepsilon)$&$v_3(\varepsilon)$&$v_4(\varepsilon)$ & $v^*(\varepsilon)$\\ \hline
1 & $-6.3212\cdot10^{-1}$ &0 & 0 & 0 & 0 \\
2 & $-8.6466\cdot10^{-1}$ &0 & 0 & 0 & 0 \\
3 & $-9.5021\cdot10^{-1}$ &$-1.4794\cdot10^{-1}$ & 0 & 0 & 0 \\
4 & $-9.8168\cdot10^{-1}$ &$-4.5993\cdot10^{-1}$ & 0 & 0 & 0 \\
5 & $-9.9326\cdot10^{-1}$ &$-7.3383\cdot10^{-1}$ & 0 & 0 & 0 \\
6 & $-9.9752\cdot10^{-1}$ &$-8.8837\cdot10^{-1}$ & $-5.1426\cdot10^{-2}$  & 0 & 0 \\
7 & $-9.9909\cdot10^{-1}$ &$-9.5672\cdot10^{-1}$ & $-2.9832\cdot10^{-1}$ & 0 & 0 \\
8 & $-9.9966\cdot10^{-1}$ &$-9.8376\cdot10^{-1}$ &$-6.0489\cdot10^{-1}$ & 0 & 0  \\
9 & $-9.9988\cdot10^{-1}$ &$-9.9398\cdot10^{-1}$ &$-8.2055\cdot10^{-1}$  & $-1.9947\cdot10^{-3}$ & 0\\
$\infty$ & -1 & -1 & -1 & -1 & -1
\end{tabular}
\caption{Lower bounds $v_d(\varepsilon)$ on the value $v^*(\varepsilon)$ of POP \eqref{poppara} obtained with the moment-SOS hierarchy, for increasing relaxation orders $d$ and different values of $\varepsilon$.}
\label{tab:poppara}
\end{center}
\end{table}

On Table \ref{tab:poppara} we report the values of the lower bounds $v_d$ on the value $v^*$ of POP \eqref{poppara} obtained with the moment-SOS hierarchy and the semidefinite solver SeDuMi, for increasing relaxation orders $d=1,\ldots,4$ and different values of $\log1/\varepsilon$. We report 0 when the bound is less than $10^{-7}$ in absolute value.
}\fi


\begin{table}
\scriptsize
{
\hspace*{-0.8cm}
\begin{tabular}{r|rrrrrrrr|r}
$\log \varepsilon^{-1}$ & $v_1(\varepsilon)$ & $v_2(\varepsilon)$ & $v_3(\varepsilon)$ & $v_4(\varepsilon)$ & $v_5(\varepsilon)$ & $v_6(\varepsilon)$ & $v_7(\varepsilon)$ & $v_8(\varepsilon)$ & $v^*(\varepsilon)$\\ 
\hline
1 & $-0.900000$ & $-0.014972$ & 0 & 0 & 0 & 0 & 0 & 0  & 0\\
2 & $-0.990000$ & $-0.639195$ & 0 & 0 & 0 & 0 & 0 & 0 & 0\\
3 & $-0.999000$ & $-0.952678$ & $-0.270070$ & 0 & 0 & 0 & 0 & 0 & 0\\
4 & $-0.999900$ & $-0.995117$ & $-0.850752$ & $-0.014093$ & 0 & 0 & 0 & 0 & 0\\
5 & $-0.999990$ & $-0.999510$ & $-0.983399$ & $-0.601522$ & 0 & 0 & 0 & 0 & 0\\
6 & $-0.999999$ & $-0.999951$ & $-0.998321$ & $-0.945228$ & $-0.228194$ & 0 & 0 & 0 & 0\\
7 & $-1.000000$ & $-0.999995$ & $-0.999832$ & $-0.994312$ & $-0.830694$ & $-0.004678$ & 0 & 0 & 0 \\
8 & $-1.000000$ & $-1.000000$ & $-0.999983$ & $-0.999429$ & $-0.980874$  & $-0.560890$ & 0 & 0 & 0\\
9 & $-1.000000$ & $-1.000000$ & $-0.999998$ & $-0.999943$ & $-0.998062$ & $-0.937191$ & $-0.188023$ & 0 & 0\\
$\infty$ & -1 & -1 & -1 & -1 & -1 & -1 & -1 & -1 & -1
\end{tabular}
}
\caption{Lower bounds $v_d(\varepsilon)$ on the value $v^*(\varepsilon)$ of POP \eqref{poppara} obtained with the moment-SOS hierarchy, for increasing relaxation orders $d$ and different values of $\varepsilon$.}
\label{tab:poppara}
\end{table}
{
On Table \ref{tab:poppara} we report the values of the lower bounds $v_d$ on the value $v^*$ of POP \eqref{poppara} obtained with the moment-SOS hierarchy and the high-precision semidefinite solvers ClusteredLowRankSolver \cite{ll22} and Loraine \cite{hks23} (both output similar results), for increasing relaxation orders $d=1,\ldots,8$ and different values of $\log1/\varepsilon$. 
We report 0 when the bound is less than $10^{-60}$ in absolute value. 

We observe that the hierarchy converges to the optimal value at finite relaxation order, but convergence is slower when $\varepsilon$ tends to zero. 
A similar table can be obtained with standard semidefinite solvers, e.g., Mosek \cite{aa20} or SeDuMi \cite{sturm99},  implemented in double floating-point precision, but the output values happen to be very inaccurate. 
For instance the first two entries of the first row obtained with either Mosek or SeDuMi are approximately equal to $-0.63$ and $0$, instead of $-0.90$ and $-0.149$, respectively. 
}

\begin{figure}
\begin{center}
\includegraphics[width=0.7\textwidth]{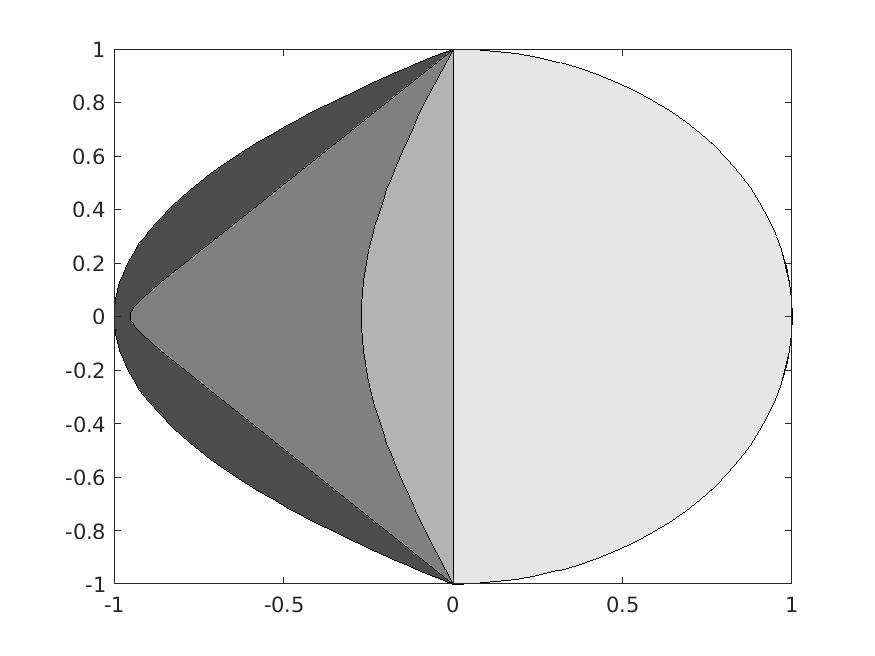}
\caption{Nested projections on the first degree moments of the moment relaxations for increasing orders $d=1,\ldots,4$, from dark to clear gray.}
\label{fig:conv2}
\end{center}
\end{figure}

On Figure \ref{fig:conv2} we represent the nested projections on the first degree moments of the moment relaxations for orders $d=1,\ldots,4$, for the case $\varepsilon=3\cdot10^{-3}$. They are outer approximations of the convex hull of the feasible set, namely the half disk $\{x \in \R^2 : x^2_1+x^2_2 \leq 1, x_1 \geq 0\}$.
While the relaxation of order $4$ is the half disk, we observe that the lower order relaxations are not tight.

\subsection{Univariate parabolic reduction}


Letting $x^2_2=1-x^2_1$, we can reformulate bivariate POP \eqref{poppara} as the univariate POP
\begin{equation}\label{univpara}
\begin{array}{rcrl}
v^*(\varepsilon) & = & \min_{x \in \R} & x \\
&&\mathrm{s.t.} &1-x^2 \geq 0 \\
&&& x+(1-\varepsilon)x^2 \geq 0, \\
\end{array}
\end{equation}
where the constraint $1-x^2\geq 0$ keeps track of positivity of $x^2_2$.
The feasible set of POP \eqref{univpara} is the segment $[0,1]$ if $\varepsilon \in (0,1]$. If $\varepsilon=0$, the feasible set is the non-convex union of $[0,1]$ and $\{-1\}$.
Next we recall background on real algebraic geometry and the moment-SOS hierarchy of semidefinite relaxations to approximate as closely as desired the solution of POP \eqref{univpara}. 


Let $\R[x]_d$ denote the vector space of polynomials of $x$
of degree up to $d$, and let $\Sigma[x]_{2d} \subset \R[x]_{2d}$ denote the convex cone of polynomials that can be expressed as sums of squares (SOS) of polynomials of degree up to $d$.
{
For more details on the definitions and results mentioned hereafter, we refer the interested reader to \cite{marshall08}. 
Given a set of polynomials $S = \{g_1, \dots, g_m\} \subset \R[x]$ and an integer $d$, let $\cM(S)_{2 d}$ be the \textit{truncated quadratic module} of degree $2d$ generated by $S$, i.e., $\cM(S)_{2d} := \{\sum_{j=0}^m \sigma_j g_j :  \sigma_j \in \Sigma[x]_{2d},
\sigma_j g_j \in \R[x]_{2d}  \}$ with $g_0:=1$.
The \textit{quadratic module} $\cM(S)$ generated by $S$ is $\cM(S) := \bigcup_{d \in \N} \cM(S)_{2d}$. 
It is clear that every polynomial from $\cM(S)$ is nonnegative on $K(S) := \{x \in \R : g_1(x) \geq 0,\dots, g_m(x) \geq 0 \}$. 
We say that $\cM(S)$ is \textit{saturated} if it coincides with the full cone of polynomials  nonnegative on $K(S)$. 
We say that $\cM(S)$ is \textit{stable} if, for every degree $d$, there exists $k=k(d)$ such that every polynomial of degree $d$ in $\cM(S)$ can be represented in $\cM(S)_{2k}$.
}
Given $p(x)=\sum_{a=0}^{2d} p_a x^a \in \R[x]_{2d}$ and $y \in \R^{2d+1}$, define the linear functional $\ell_y$ such that $\ell_y(p(x))=\sum_{a=0}^{2d} p_a y_a$.
The relaxation of order $d$  of the moment-SOS hierarchy for POP \eqref{univpara}  consists of solving the primal moment problem
\begin{equation}
    \begin{array}{rcll}
\text{mom}_d(\varepsilon) & = & \inf_{y \in \R^{2d+1}} & \ell_y(x) \\
& &   \mathrm{s.t.} & \ell_y(p(x)) \geq 0, \forall p \in \Sigma[x]_{2d} \\
& & & \ell_y((1-x^2)p(x)) \geq 0, \forall p \in \Sigma[x]_{2(d-1)} \\
& & & \ell_y((x-(1-\varepsilon)x^2)p(x)) \geq 0, \forall p \in \Sigma[x]_{2(d-1)} \\
& & & \ell_y(1)=1 , 
    \end{array}
\end{equation}
and the dual SOS problem
\begin{equation}
    \label{eq:SOS_d}
    \begin{array}{rcrl}
\text{sos}_d(\varepsilon) & = & \sup_{q,r,s,v} & v \\  
& &  \mathrm{s.t.} & x-v = q(x) + r(x) (1-x^2) + s(x) (x+(1-\varepsilon)x^2) \\
& & & q \in \Sigma[x]_{2d}, r \in \Sigma[x]_{2(d-1)}, s \in \Sigma[x]_{2(d-1)}, v \in \R.
   \end{array}
   \end{equation}

{
Using the above notations, the equality constraint from \eqref{eq:SOS_d} can be rewritten as $x-v \in \cM(S_{\varepsilon})_{2d}$ with $S_{\varepsilon} := \{1-x^2,x+(1-\varepsilon)x^2\}$. 
}

Thanks to the constraint $1-x^2 \geq 0$, these are semidefinite optimization problems without duality gap \cite{jh16}:
$$v_d(\varepsilon):=\text{mom}_d(\varepsilon)=\text{sos}_d(\varepsilon).$$
We say that the relaxation of order $d$ is exact when  $v_d(\varepsilon)= v^\star(\varepsilon)$.  
A sufficient condition for exactness is when this latter value is attained in the SOS dual, namely
\begin{equation}\label{sos}
x-v^\star(\varepsilon) = q(x) + r(x) (1-x^2) + s(x) (x+(1-\varepsilon)x^2) ,
\end{equation}
for some SOS polynomials $q,r,s$ of appropriate degrees.

\if{
\adrien{I think Nie's definition of exactness is rather that 
$v_d(\varepsilon) = v^*(\varepsilon)$, which is weaker than~\eqref{sos}
as there are examples with finite convergence
but no maximizer: think of $x-v = q(x) - r(x)x^2$.
I don't know if his result ensures
that the above holds.
If it does, we might not need Lemma~\ref{le:compactness} 
but we should keep the part of its proof 
which is reused to show Corollary~\ref{th:varepsilon_d}.}
}\fi

\if{
We will prove that when $\varepsilon \in (0,1)$, it holds $v^\star(\varepsilon)=0$ and letting $x=0$ in \eqref{sos} implies that $q(0)+r(0)=0$ and since both $q$ and $r$ are SOS this implies $q(0)=r(0)=0$ and hence both contain the factor $x^2$, so equation \eqref{sos} can be rewritten as
\begin{equation}\label{sosred}
x = q(x) x^2 + r(x)x^2(1-x^2)+s(x)(x+(1-\varepsilon)x^2)
\end{equation}
\didier{for some SOS polynomials $q$, $r$ resp. $s$ of degree $2(d-1)$, $2(d-2)$ resp. $2(d-1)$.}
Factoring out $x$ and letting $x=0$ implies that $s(0)=1$.
}\fi

The values of the lower bounds $v_d(\varepsilon)$ on the value $v^*(\varepsilon)$ of POP \eqref{univpara} obtained with the moment-SOS hierarchy are the same as the one reported in Table \ref{tab:poppara}.

\section{Analysis}

\begin{figure}
\begin{center}
\includegraphics[width=0.3\textwidth]{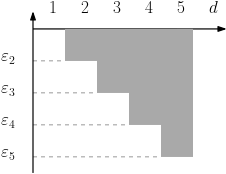}
\caption{Staircase pattern of the moment-SOS hierarchy: the gray region corresponds to values of relaxation order $d$ and parameter $\varepsilon$ for which the relaxation is exact. The steps correspond to threshold values $\varepsilon_2 \geq \varepsilon_3 \geq \revise{\varepsilon_4  \geq \,} \varepsilon_5$ for each degree.}
\label{fig:staircase}
\end{center}
\end{figure}

The aim of this section is to explain analytically how the numerical values of Table \ref{tab:poppara} follow the staircase pattern of Figure \ref{fig:staircase}.
The gray region on Figure \ref{fig:staircase} corresponds to values of $d$ and $\varepsilon$ for which the relaxation is exact, i.e., $v_d(\varepsilon)=v^*(\varepsilon)$.
The steps of the staircase correspond to limit values $\varepsilon_d$ such that for all $\varepsilon \geq \varepsilon_d$, the relaxation of order $d$ is exact.

{Proposition} \ref{prop:finitecvg} shows that we indeed have a staircase geometry. {Proposition} \ref{th:varepsilon_d} shows that the staircase has genuine steps, i.e., the limit values $\varepsilon_d$ can be attained. Finally, Theorem \ref{th:bounds} gives explicit lower and upper bounds on $\varepsilon_d$ as functions of $d$, to quantify the slope of the staircase.

\subsection{Finite convergence}\label{sec:finite}


{
\begin{proposition}
\label{prop:finitecvg}
For any $\varepsilon \in [0,1]$,
the moment-SOS hierarchy converges in a finite number of steps.  In particular $v_d(0)=-1$ for all orders $d$, and for all $\varepsilon \in (0, 1]$ one has  $v_d(\varepsilon)=0$ for a finite relaxation order $d$  depending on $\varepsilon$.
\end{proposition}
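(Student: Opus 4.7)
The plan is to split the argument in two according to whether the parameter $\varepsilon$ vanishes, since the geometry of the feasible set genuinely changes in the two regimes. For $\varepsilon = 0$ the feasible set is $[0,1] \cup \{-1\}$ with optimum $v^*(0) = -1$ attained at the isolated point $x = -1$, while for $\varepsilon \in (0, 1]$ the feasible set is the interval $[0,1]$ with optimum $v^*(\varepsilon) = 0$ attained at $x = 0$. In either case, the monotonicity $v_d(\varepsilon) \leq v_{d+1}(\varepsilon) \leq v^*(\varepsilon)$ reduces the task to producing a matching dual certificate at some finite order $d$.

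For $\varepsilon = 0$, I would simply write down the explicit dual certificate at $d = 1$: the identity
\[
x - (-1) = 1 \cdot (1 - x^2) + 1 \cdot (x + x^2)
\]
places $x + 1$ in $\cM(S_0)_2$, so it is feasible for \eqref{eq:SOS_d} at order $1$ with $q = 0$, $r = s = 1$, $v = -1$. Hence $\mathrm{sos}_1(0) \geq -1$, and combined with $v_d(0) \leq v^*(0) = -1$ this forces $v_d(0) = -1$ for every $d \geq 1$.

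For $\varepsilon \in (0, 1]$, I would invoke the finite convergence theorem of Nie~\cite{nie2014optimality}. The verification reduces to checking, at the unique minimizer $x^* = 0$ of $x$ over the feasible set $[0,1]$, three conditions: (i) only $g_2(x) := x + (1-\varepsilon) x^2$ is active and $g_2'(0) = 1 \neq 0$, so the linear independence constraint qualification holds; (ii) the KKT equation $1 = \lambda\, g_2'(0)$ fixes the Lagrange multiplier at $\lambda = 1 > 0$, giving strict complementarity; (iii) the critical cone at $x^*$ reduces to $\{0\}$, so the second-order sufficient condition holds vacuously. Nie's theorem then yields a finite relaxation order $d = d(\varepsilon)$ such that $v_d(\varepsilon) = 0$.

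The principal obstacle is that Nie's theorem is essentially a black box: it provides existence of $d(\varepsilon)$ but is silent on how $d$ depends on $\varepsilon$, and in particular offers no handle on the fact that $d(\varepsilon)$ blows up as $\varepsilon \to 0^+$. This is precisely why the quantitative analysis of Proposition~\ref{th:varepsilon_d} and Theorem~\ref{th:bounds}, which captures the staircase pattern of Figure~\ref{fig:staircase}, is a non-trivial complement to the qualitative statement of Proposition~\ref{prop:finitecvg}.
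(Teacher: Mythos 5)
Your proof is correct, and the $\varepsilon=0$ half (the order-$1$ certificate $x+1=(1-x^2)+(x+x^2)$ plus monotonicity of the bounds) is exactly the paper's. For $\varepsilon\in(0,1]$, however, you take the route that the paper relegates to the remark \emph{following} the proposition: verifying the linear independence constraint qualification, strict complementarity ($\mu=(0,1)$) and the vacuous second-order sufficient condition at the unique local minimizer $x^*=0$, then invoking \cite[Theorem 1.1]{nie2014optimality}. The paper's primary proof is structurally different: it shows that the quadratic module $\cM(S_\varepsilon)$ is \emph{saturated}, by checking the endpoint conditions $g_\varepsilon(0)=0$, $g_\varepsilon'(0)=1>0$, $g(1)=0$, $g'(1)=-1<0$ and applying Marshall's characterization of saturated quadratic modules on intervals (Theorems 9.3.1 and 9.3.3 of \cite{marshall08}). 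The saturation route buys something your route does not make explicit: it directly delivers the membership $x\in\cM(S_\varepsilon)$, i.e., an actual SOS representation at some finite order, which is what makes the feasible set of \eqref{eq:varepsilon_d} non-empty and underpins the definition of the thresholds $\varepsilon_d$; finite convergence of the \emph{value} alone does not guarantee such a representation (cf. the paper's example $\min_{x}x$ s.t. $-x^2\ge0$, where the dual supremum is not attained). Conversely, your route is the one that generalizes beyond the univariate interval case, where saturation is exceptional. Two minor points: Nie's theorem also assumes the Archimedean condition, supplied here by $1-x^2\ge0$, and it requires the optimality conditions at \emph{every} local minimizer, so you should note (as the paper does) that $x^*=0$ is the unique local minimizer of $x$ on $[0,1]$ — both facts are immediate but worth stating.
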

}
\begin{proof}
    If $\varepsilon = 0$, Equation \eqref{sos} admits the elementary solution $v_1(0)=v^*(0)=-1$ for $q=0$, $r=1$ and $s=1$, so the first relaxation is exact.

{
We now consider $\varepsilon \in (0,1]$, $S_{\varepsilon} = \{g, g_{\varepsilon} \}$ with $g=1-x^2$ and $g_{\varepsilon} = x + (1-\varepsilon)x^2$. 
POP~\eqref{univpara} has a unique local and global minimizer $x^* = 0$ over the feasibility set  $K(S_{\varepsilon})=[0,1]$. 
As suggested by an anonymous referee, 
we now prove that the quadratic module  $\cM(S_{\varepsilon})$ is saturated. 
The end points of the closed interval $K(S_{\varepsilon})= [0, 1]$ satisfy $g_{\varepsilon}(0) = 0 $, $g_{\varepsilon}'(0) = 1 > 0$, $g(1)=0$ and $g'(1) = -1 < 0$. 
Then it follows from Theorem 9.3.1 and Theorem 9.3.3 in \cite{marshall08} that $\cM(S_{\varepsilon})$ is saturated, and in particular $x$ belongs to $\cM(S_{\varepsilon})$. 
In other words, the moment-SOS hierarchy converges in finitely many steps for POP~\eqref{univpara}. 
}        
\end{proof}
{
\begin{remark}
For the case $\varepsilon \in (0,1]$ we present an alternative proof of {Proposition}~\ref{prop:finitecvg} based on the results from \cite{nie2014optimality}. 
We first check that the so-called \textit{constraint qualification} holds at each local minimizer, i.e., that the corresponding gradient evaluations of the polynomials involved in the active constraints are linearly independent.  
Since $g(0) = 1$ and $g_{\varepsilon} = 0$, there is only one active constraint at the global minimizer $x^*=0$ so the constraints are qualified. 

We now introduce the Lagrangian
    \begin{equation*}
        L(x, \mu) = x - \mu_1(1-x^2) - \mu_2(x + (1-\varepsilon) x^2) \,,
    \end{equation*}
    which gives the following first-order necessary conditions of optimality:
    \begin{equation*}
        \begin{cases}
            1 + 2\mu_1x - \mu_2 - 2\mu_2(1-\varepsilon)x = 0 \,, \\
            \mu_1(1-x^2) = 0 \,, \\
            \mu_2(x + (1-\varepsilon)x^2) = 0 \,, \\
            x \in [0,1] \, , \mu \geq 0 \,.
        \end{cases}
    \end{equation*}

    We deduce that there exists a unique pair of optimal
    multipliers $(\mu_1, \mu_2) = (0, 1)$. Since moreover, 
    $x + (1-\varepsilon)x^2 \geq 0$ is the only active 
    constraint at $x^*$ with associated multiplier
    $\mu_2 > 0$, strict complementarity is satisfied.

    Lastly, the Jacobian of the active constraint
    at $x^*$ is $J(x^*) = 1$, so that its null space in $\RR$
    is reduced to $\{0\}$. With the second-order derivative
    $\nabla^2_x L(x^*,\mu^*) = - 2(1-\varepsilon)$,
    we check that the second-order necessary condition of optimality
    \begin{equation*}
        x^T \nabla^2_x L(x^*,\mu^*) x \geq 0 \, , \forall x \in \{0\}
    \end{equation*}
    is satisfied, and so is the second-order sufficient condition of optimality
    \begin{equation*}
        x^T \nabla^2_x L(x^*,\mu^*) x > 0 \,,   \text{ for all} \ 0 \neq x \in \{0\}  \,.
    \end{equation*}
    We conclude that the moment-SOS hierarchy converges in finitely
    many steps for POP~\eqref{univpara}, from \cite[Theorem 1.1]{nie2014optimality}.    
\end{remark}
Note that finite convergence does not necessarily come together with the existence of maximizers in the SOS hierarchy. 
A simple example in the univariate case is $\min_{x \in \R} x \:\mathrm{s.t.}\: -x^2 \geq 0$, see \cite[Ex. 1.3.4]{b22} or \cite[\S 2.5.2]{n23}, where finite convergence occurs at the first order but the corresponding SOS optimization problem has no maximizer. }

We now prove that, beyond finite convergence,
there always exist maximizers in~\eqref{eq:SOS_d}. 
Let $\varepsilon \in [0, 1]$ and $d \in \NN^*$. 
We notice that, as $v_1(\varepsilon) \leq v_d(\varepsilon) \leq v^*(\varepsilon)$, we must have $v_d(\varepsilon) \in [-1, 0]$.
\if{
Let $\varepsilon \in [0, 1]$ and $d \in \NN^*$. 
We notice that, as $v_1(\varepsilon) \leq v_d(\varepsilon) \leq v^*(\varepsilon)$, we must have $v_d(\varepsilon) \in [-1, 0]$.
So let us denote by $S^{[-1,0]}_d(\varepsilon)$ the feasible set of the SOS relaxation~\eqref{eq:SOS_d} with the additional constraint $v \in [-1,0]$. The supremum in~\eqref{eq:SOS_d} can be equivalently 
searched over $S^{[-1,0]}_d(\varepsilon)$.
}\fi
{
\begin{lemma}
\label{le:compactness}
The set $\cM(S_{\varepsilon})_{2 d}$ is compact and 
there always exists a solution $(q, r, s, v)$ maximizing \eqref{eq:SOS_d}. 
\end{lemma}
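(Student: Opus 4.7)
The set $\cM(S_\varepsilon)_{2d}$ is a convex cone, so I read ``compact'' as closedness in the finite-dimensional space $\R[x]_{2d}$; this is the substantive content, and existence of a maximizer in \eqref{eq:SOS_d} will follow by a standard closed-preimage argument. The plan is to take any sequence $(p_n) \subset \cM(S_\varepsilon)_{2d}$ converging to some $p \in \R[x]_{2d}$, write
\[
p_n = \sigma_0^{(n)} + \sigma_1^{(n)}(1-x^2) + \sigma_2^{(n)}(x+(1-\varepsilon)x^2),
\]
and extract a subsequence whose SOS multipliers converge individually, using that the SOS cone in each fixed degree is closed.

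The crucial observation is that all three summands on the right-hand side are pointwise nonnegative on $[0,1]$, since $1-x^2 \geq 0$ on $[-1,1]$ and $x+(1-\varepsilon)x^2 = x(1+(1-\varepsilon)x) \geq 0$ on $[0,1]$ for every $\varepsilon \in [0,1]$. Convergence of $(p_n)$ gives a uniform bound $p_n \leq M$ on $[0,1]$, so nonnegativity forces each summand to be bounded by $M$ there. From this I would derive uniform bounds on sub-intervals: $\sigma_0^{(n)}$ is bounded on $[0,1]$; $\sigma_1^{(n)}$ is bounded on $[0,1/2]$, where $1-x^2 \geq 3/4$; and $\sigma_2^{(n)}$ is bounded on $[1/2,1]$, where $x(1+(1-\varepsilon)x) \geq 1/2$. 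Equivalence of norms on polynomials of fixed degree restricted to any non-degenerate interval then turns these sup-norm estimates into bounds on the coefficients, and Bolzano--Weierstrass yields subsequential limits $\sigma_0^\star, \sigma_1^\star, \sigma_2^\star$, which remain SOS by closedness of the SOS cones. Summing gives $p = \sigma_0^\star + \sigma_1^\star(1-x^2) + \sigma_2^\star(x+(1-\varepsilon)x^2) \in \cM(S_\varepsilon)_{2d}$, hence closedness.

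To conclude existence of a maximizer for \eqref{eq:SOS_d}, I would rewrite the SOS program as $\mathrm{sos}_d(\varepsilon) = \sup\{v \in \R : x-v \in \cM(S_\varepsilon)_{2d}\}$, whose feasible set in $v$ is the preimage of the closed cone $\cM(S_\varepsilon)_{2d}$ under the continuous affine map $v \mapsto x-v$, hence closed; it is also bounded above by $v^\star(\varepsilon) \leq 0$, so the supremum $v_d(\varepsilon)$ is attained, and any witness of $x - v_d(\varepsilon) \in \cM(S_\varepsilon)_{2d}$ supplies the triple $(q,r,s)$ completing the maximizer. The main obstacle is ruling out cancellations between $\sigma_1^{(n)}$ and $\sigma_2^{(n)}$ that could make these multipliers blow up while their weighted sum stays bounded; this is exactly what pointwise nonnegativity of each summand on $[0,1]$ prevents, and it is the only step where the specific algebraic structure of $S_\varepsilon$ --- rather than a generic compactness argument --- is used.
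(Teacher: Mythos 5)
Your proof is correct, and it reaches the conclusion by a somewhat different route than the paper. The paper outsources closedness of $\cM(S_\varepsilon)_{2d}$ to Marshall (Lemma 4.1.4 and Theorem 10.5.1, invoked via the nonempty interior of $K(S_\varepsilon)$) and spends its effort on a boundedness estimate for the multipliers: on the single interval $I=[0.1,0.9]$ both generators are bounded away from zero, each summand is nonnegative and dominated by $x-v$ with $v\in[-1,0]$, so $\|\sigma\|_I\le U$ for $\sigma\in\{q,r,s\}$; compactness of the set of feasible tuples $(q,r,s,v)$ then yields the maximizer. You instead prove closedness of the truncated module from scratch, using exactly the same mechanism (pointwise nonnegativity of each summand on a subinterval where its generator is bounded below — your $[0,1]$, $[0,1/2]$, $[1/2,1]$ playing the role of the paper's $[0.1,0.9]$) to rule out cancellation, extract convergent subsequences, and close the limit; you then obtain the maximizer from closedness alone, via the closed, upper-bounded set of feasible values $v$. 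Your version is more self-contained and correctly identifies that the cone $\cM(S_\varepsilon)_{2d}$ itself cannot literally be compact — the paper's proof makes clear the intended object is the set of representations with $v\in[-1,0]$, and your sup-norm estimates in fact deliver that boundedness too (with $M=2$), which is what Proposition~\ref{th:varepsilon_d} later needs. The only cosmetic omission is that attainment of the supremum requires the set of feasible $v$ to be nonempty; this is immediate from the first relaxation (take $q=0$, $r=1-\varepsilon$, $s=1$, $v=\varepsilon-1$), but deserves a word.
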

}
\begin{proof}
As suggested by an anonymous referee, 
for $\varepsilon \in [0,1]$, the set $K(S_{\varepsilon})$ has nonempty interior, thus Lemma 4.1.4 and Theorem 10.5.1 from \cite{marshall08} imply that for each $d\in \N$ the truncated quadratic module $\cM(S_{\varepsilon})_{2 d}$ is closed. 

We now concentrate on proving that it is bounded.
	Let {$(q,r,s,v)$  be such that 
	$x-v = q(x) + r(x) (1-x^2) + s(x) (x+(1-\varepsilon)x^2) \in \cM(S_{\varepsilon})_{2 d}$. 
    By the above remark one has $v\in [-1, 0]$. }
	Let $I = [0.1, 0.9]$,
	we recall that, for any $n \in \NN$,
	$||p||_I = \max_{x \in I} |p(x)|$
	is a norm on $\RR[x]_n$.
	Now,	
	for any $\sigma \in \{q, r, s\}$
	and its associated constraint
	$g_{\varepsilon} \in \{1, 1-x^2, x+(1-\varepsilon)x^2\}$,
	it holds that
	\begin{equation*}
		x-v - \sigma(x)g_{\varepsilon}(x) \geq 0
		\,, \forall x \in I 
	\end{equation*}
	with $g_{\varepsilon}(x) > 0$ over $I$.
	We deduce that
	\begin{equation*}
		\sigma(x) \leq \max_{x \in I, v \in [-1,0], \varepsilon \in [0,1]} \ \frac{x-v}{g_{\varepsilon}(x)}
		\,, \forall x \in I \,
	\end{equation*}
	so that, as $\sigma \geq 0$,
	$||\sigma||_I \leq U$ for any upper bound $U > 0$
	larger than the right-hand side in the above inequality.
	In particular, let us set $U$ so that $||\sigma||_I \leq U$
	for all $\sigma \in \{q, r, s\}$. 
    {
    Since $v \in [-1, 0]$,
	we conclude that $\cM(S_{\varepsilon})_{2 d}$}
	is bounded, and thus compact.
\end{proof}

We now consider the step value

\begin{equation}
\label{eq:varepsilon_d}
\begin{array}{rcrl}
\varepsilon_d & := & \inf_{\varepsilon,q,r,s} & \varepsilon \\  
& &  \mathrm{s.t.} & x = q(x) + r(x) (1-x^2) + s(x) (x+(1-\varepsilon)x^2) \\
& & & \varepsilon \in [0,1], q \in \Sigma[x]_{2d}, r \in \Sigma[x]_{2(d-1)}, s \in \Sigma[x]_{2(d-1)}.
\end{array}
\end{equation}
{
\begin{proposition}
\label{th:varepsilon_d}
The infimum in~\eqref{eq:varepsilon_d} is attained.
\end{proposition}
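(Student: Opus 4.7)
The plan is to prove the infimum is attained by a direct compactness argument: take a minimizing sequence, extract a subsequential limit in both the parameter and the polynomial variables, and check that the limit is feasible. The key observation is that the uniform-in-$\varepsilon$ polynomial bound already established (implicitly) in the proof of Lemma~\ref{le:compactness} transfers directly to this setting.

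First I would fix a minimizing sequence $(\varepsilon_n, q_n, r_n, s_n)_{n \in \NN}$ with $\varepsilon_n \to \varepsilon_d$; note that the feasible set is nonempty since $\varepsilon = 1$, $q = 0$, $r = 0$, $s = 1$ trivially satisfies the identity. Since $[0,1]$ is compact, a subsequence satisfies $\varepsilon_n \to \varepsilon^* \in [0,1]$, and minimality forces $\varepsilon^* = \varepsilon_d$. Next I would extract a convergent subsequence of the polynomial triples by reusing the estimate from the proof of Lemma~\ref{le:compactness} applied with $v = 0$: on $I = [0.1, 0.9]$ every summand of $q_n(x) + r_n(x)(1-x^2) + s_n(x)(x+(1-\varepsilon_n)x^2) = x$ is non-negative, so each $\sigma_n \in \{q_n, r_n, s_n\}$ obeys $\|\sigma_n\|_I \leq U$ with $U := \max_{(x,\varepsilon) \in I \times [0,1]} x / (x + (1-\varepsilon)x^2)$ a finite constant that is independent of $n$ (and, crucially, of $\varepsilon_n$). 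Since $q_n, r_n, s_n$ live in fixed finite-dimensional polynomial spaces on which $\|\cdot\|_I$ is equivalent to any coefficient norm, Bolzano--Weierstrass yields a further subsequence along which $(q_n, r_n, s_n) \to (q^*, r^*, s^*)$ coefficient-wise.

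To conclude, I would pass to the limit in the defining constraint. The SOS cones $\Sigma[x]_{2d}$ and $\Sigma[x]_{2(d-1)}$ are closed (as preimages of the PSD cone under the Gram-matrix parametrization), so $q^*, r^*, s^*$ are themselves SOS of the prescribed degrees. The polynomial identity $x = q_n + r_n(1-x^2) + s_n(x+(1-\varepsilon_n)x^2)$ is an equality of coefficients, and therefore passes to the limit coefficient by coefficient to give $x = q^* + r^*(1-x^2) + s^*(x+(1-\varepsilon^*)x^2)$. Hence $(\varepsilon^*, q^*, r^*, s^*)$ is feasible for~\eqref{eq:varepsilon_d} and achieves the value $\varepsilon_d$. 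The only subtlety I anticipate is ensuring that the polynomial bound $U$ does not blow up as $\varepsilon_n \to \varepsilon_d$; but this uniformity is already baked into the proof of Lemma~\ref{le:compactness} by taking the maximum over all $\varepsilon \in [0,1]$ from the outset, after which the remaining limit-passing is routine.
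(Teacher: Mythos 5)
Your proof is correct and follows essentially the same route as the paper, which simply invokes the compactness established in Lemma~\ref{le:compactness} to conclude that the feasible set of~\eqref{eq:varepsilon_d} is compact. Your version is more explicit (minimizing sequence, uniformity of the bound $U$ in $\varepsilon$, closedness of the SOS cones, passage to the limit in the coefficient identity), and in fact spells out the uniformity-in-$\varepsilon$ point that the paper's one-line proof leaves implicit.
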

\begin{proof}
Let us asumme that $(\varepsilon, q,r,s)$ satisfies~\eqref{eq:varepsilon_d}, so in particular $\varepsilon \in [0,1]$ and $x \in \cM(S_{\varepsilon})_{2 d}$. 
We deduce from Lemma~\ref{le:compactness} that the feasible set of~\eqref{eq:varepsilon_d} is compact, yielding the desired result. 
\end{proof}
}



Letting $x=0$ in \eqref{eq:varepsilon_d} implies that $q(0)+r(0)=0$ and since both $q$ and $r$ are SOS this implies $q(0)=r(0)=0$ and hence both contain the factor $x^2$, so the equality constraint can be rewritten as
\begin{equation}\label{sosred}
x = {\tilde{q}}(x) x^2 + {\tilde{r}}(x)x^2(1-x^2)+s(x)(x+(1-\varepsilon)x^2) ,
\end{equation}
for ${\tilde{q},\revise{s}} \in \Sigma[x]_{2 (d-1)}$, $\revise{\tilde{r}} \in  \Sigma[x]_{2 (d-2)}$.
Factoring out $x$ and letting $x=0$ implies that $s(0)=1$.
Let
\[
\begin{array}{rcl}
{\cM^{\star}(S_{\varepsilon})_{2 d}}& :=& \{{(\tilde{q}, \tilde{r}, s )} : 
\tilde{q},s \in \Sigma[x]_{2(d-1)}, 
{\tilde{r}} \in \Sigma[x]_{2(d-2)}, \\
& &
\quad x - v^\star(\varepsilon)=  {\tilde{q}}(x) x^2 + {\tilde{r}}(x)x^2(1-x^2)+s(x)(x+(1-\varepsilon)x^2) \}
\end{array}
\]
denote the feasible set of the dual SOS problem when the relaxation is exact.

{Proposition} \ref{th:varepsilon_d} implies that for every degree $d$, there is a minimal value of $\varepsilon$, denoted $\varepsilon_d$, such that {$\cM^{\star}(S_{\varepsilon})_{2 d}$} is non-empty. 
In the sequel, we provide lower and upper bounds estimates on $\varepsilon_d$. 
Section \ref{sec:lower} is dedicated to lower relaxation orders, namely $d\in\{1,2,3\}$, while Section \ref{sec:higher} focuses on arbitrary high relaxation orders.

\subsection{Lower relaxations}
\label{sec:lower}

{
\begin{lemma}
\label{lemma:cMstar}
If $\cM^{\star}(S_{\varepsilon})_{2 d}$ is non-empty then
$\cM^{\star}(S_{\varepsilon+\delta})_{2 d}$ is non-empty for all $\delta\geq 0$. 
\end{lemma}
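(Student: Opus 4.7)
The plan is to produce an explicit transformation of an SOS certificate from parameter $\varepsilon$ to parameter $\varepsilon+\delta$, using the algebraic identity $(1-\varepsilon-\delta)x^2 = (1-\varepsilon)x^2 - \delta x^2$.

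First, I would remark that the hypothesis forces $\varepsilon > 0$. Indeed, if $\varepsilon = 0$ then $v^\star(0) = -1$, and evaluating the defining equation of $\cM^{\star}(S_0)_{2d}$ at $x=0$ would yield $1 = 0$; hence $\cM^{\star}(S_0)_{2d}$ is empty and the conclusion is vacuous. So I may assume $\varepsilon > 0$, in which case $v^\star(\varepsilon) = 0$, and for every $\delta \geq 0$ one still has $v^\star(\varepsilon+\delta) = 0$ so that $\cM^{\star}(S_{\varepsilon+\delta})_{2d}$ is defined with right-hand side $x$ as well.

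Given $(\tilde{q},\tilde{r},s) \in \cM^{\star}(S_{\varepsilon})_{2d}$, the equality
\[
x = \tilde{q}(x)\,x^2 + \tilde{r}(x)\,x^2(1-x^2) + s(x)\bigl(x + (1-\varepsilon)x^2\bigr)
\]
can be rewritten, by adding and subtracting $\delta\, x^2 s(x)$, as
\[
x = \bigl(\tilde{q}(x) + \delta s(x)\bigr)\,x^2 + \tilde{r}(x)\,x^2(1-x^2) + s(x)\bigl(x + (1-\varepsilon-\delta)x^2\bigr).
\]
Hence the candidate triple $(\tilde{q}', \tilde{r}', s') := (\tilde{q} + \delta s,\, \tilde{r},\, s)$ satisfies the algebraic identity defining $\cM^{\star}(S_{\varepsilon+\delta})_{2d}$.

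It remains to check degrees and SOS membership. Since $\delta \geq 0$, $\delta s \in \Sigma[x]_{2(d-1)}$, and the sum of two elements of $\Sigma[x]_{2(d-1)}$ lies in $\Sigma[x]_{2(d-1)}$, so $\tilde{q}' \in \Sigma[x]_{2(d-1)}$; the components $\tilde{r}' \in \Sigma[x]_{2(d-2)}$ and $s' \in \Sigma[x]_{2(d-1)}$ are unchanged. Therefore $(\tilde{q}', \tilde{r}', s') \in \cM^{\star}(S_{\varepsilon+\delta})_{2d}$, which proves the lemma.

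The argument is purely algebraic and there is no real obstacle; the only point worth flagging is the degenerate case $\varepsilon = 0$, which is handled by noting that the hypothesis cannot be satisfied there.
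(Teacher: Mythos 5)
Your proof is correct and takes essentially the same route as the paper, which simply observes that $(\tilde{q}+\delta s,\,\tilde{r},\,s)$ certifies membership in $\cM^{\star}(S_{\varepsilon+\delta})_{2d}$ by absorbing the term $-\delta s(x)x^2$. Your added checks on degrees, closure of $\Sigma[x]_{2(d-1)}$ under nonnegative combinations, and the vacuity of the case $\varepsilon=0$ are sound but go beyond what the paper records.
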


\begin{proof}
Given $(\revise{\tilde{q}, \tilde{r}}, s) \in \cM^{\star}(S_{\varepsilon})_{2 d}$,
observe that $(\revise{\tilde{q}}+\delta s x^2, \revise{\tilde{r}}, s) \in \cM^{\star}(S_{\varepsilon+\delta})_{2 d}$ if $\delta \geq 0$.
\end{proof}
}

\subsubsection{First relaxation}

\begin{lemma}\label{exact1}
When $\varepsilon=0$, the first relaxation is exact.
\end{lemma}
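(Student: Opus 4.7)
The plan is simply to produce an explicit SOS certificate at the minimal order $d=1$. Recall from the earlier discussion that the feasible set of POP~\eqref{univpara} at $\varepsilon = 0$ is $[0,1]\cup\{-1\}$, so $v^*(0) = -1$. Since the order-$1$ SOS relaxation~\eqref{eq:SOS_d} satisfies $v_1(0) \leq v^*(0) = -1$ by weak duality, exactness will follow as soon as I exhibit one feasible tuple $(q,r,s,v)$ of the right degrees with $v = -1$.

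Concretely, I need to find $q \in \Sigma[x]_{2}$ and $r,s \in \Sigma[x]_0$ (i.e., nonnegative constants) such that
\begin{equation*}
x + 1 \;=\; q(x) \;+\; r(x)(1-x^2) \;+\; s(x)\bigl(x + x^2\bigr).
\end{equation*}
The simplest guess is to use no ``slack'' in $q$ and uniform weights on the two constraint polynomials: take $q \equiv 0$, $r \equiv 1$, $s \equiv 1$. Then the right-hand side collapses to $(1-x^2) + (x+x^2) = 1 + x$, matching the left-hand side. This gives a feasible point for~\eqref{eq:SOS_d} at order $d=1$ with value $v = -1$.

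There is essentially no obstacle here: the cancellation of the $x^2$ terms between $1-x^2$ and $x + x^2$ immediately produces the desired affine polynomial $1+x$. Combined with the weak-duality bound $v_1(0) \leq v^*(0)$, this yields $v_1(0) = v^*(0) = -1$, i.e., the first relaxation is exact. This also reproves, at the level of~\eqref{sos}, what was already noted inside the proof of Proposition~\ref{prop:finitecvg}.
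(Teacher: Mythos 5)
Your proposal is correct and coincides with the paper's own argument: the paper (in the proof of Proposition~\ref{prop:finitecvg}, to which Lemma~\ref{exact1} refers) uses exactly the certificate $q=0$, $r=1$, $s=1$, giving $x+1=(1-x^2)+(x+x^2)$ at order $d=1$. Your added remark that weak duality gives $v_1(0)\leq v^*(0)=-1$, so the certificate forces equality, is a correct and slightly more explicit justification of the same step.
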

\begin{proof}
See the beginning of the proof of {Proposition} \ref{prop:finitecvg}. 
\end{proof}

\begin{lemma}
When $\varepsilon>0$, the first relaxation is never exact.
\end{lemma}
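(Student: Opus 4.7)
The plan is to exploit the reduced SOS identity~\eqref{sosred} at relaxation order $d=1$, where the cones involved become maximally constrained, and then derive a contradiction by matching polynomial coefficients.

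First I would instantiate \eqref{sosred} with $d=1$. At this order the cone $\Sigma[x]_{2(d-2)} = \Sigma[x]_{-2}$ in which $\tilde{r}$ is supposed to live contains only the zero polynomial, so the $\tilde r$-term vanishes entirely. Meanwhile $\tilde{q}$ and $s$ both lie in $\Sigma[x]_0$, i.e., they are nonnegative real constants. The identity~\eqref{sosred} therefore collapses to
\[
x \;=\; \tilde{q}\, x^2 \;+\; s\,\bigl(x + (1-\varepsilon)\, x^2\bigr)
\]
for two scalars $\tilde{q}, s \ge 0$.

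Comparing the coefficients of $x$ and $x^2$ on both sides then gives the simple linear system $s = 1$ and $\tilde{q} + s(1-\varepsilon) = 0$, whose unique solution is $s=1$ and $\tilde{q} = \varepsilon - 1$. For $\varepsilon \in (0,1)$ this forces $\tilde{q} < 0$, contradicting $\tilde{q} \ge 0$, so no admissible triple $(\tilde q, \tilde r, s)$ exists, $\cM^{\star}(S_\varepsilon)_2 = \emptyset$, and the first relaxation cannot be exact. The boundary value $\varepsilon = 1$ is harmless since the parabolic constraint then degenerates to the linear constraint $x \ge 0$ and the POP trivializes.

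The only point that requires slight care is to confirm that the reduction leading to~\eqref{sosred} genuinely applies at $d=1$. This follows from Proposition~\ref{prop:finitecvg}, which provides $v^*(\varepsilon) = 0$ for $\varepsilon > 0$, combined with Lemma~\ref{le:compactness}, which guarantees that if the first relaxation were exact then a maximizer $(q,r,s)$ in~\eqref{eq:SOS_d} would exist; setting $x=0$ in the resulting SOS identity and using nonnegativity of $q(0)$ and $r(0)$ forces both to vanish, hence $q$ and $r$ each factor through $x^2$. Once this justification is in place, the remainder of the argument is pure arithmetic and presents no real obstacle.
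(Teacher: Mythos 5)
Your proof is correct for $\varepsilon\in(0,1)$, but it takes a genuinely different route from the paper. The paper argues on the \emph{primal} (moment) side: it writes out the order-$1$ moment relaxation explicitly and exhibits the feasible point $y=(\varepsilon-1,1)$, which immediately gives $v_1(\varepsilon)\le\varepsilon-1<0=v^*(\varepsilon)$; together with the dual-feasible triple $q=0$, $r=1-\varepsilon$, $s=1$ this even pins down the exact value $v_1(\varepsilon)=\varepsilon-1$. You instead argue on the \emph{dual} (SOS) side by contradiction: assuming exactness, you invoke Lemma~\ref{le:compactness} to get a maximizer of \eqref{eq:SOS_d}, pass to the reduced identity \eqref{sosred} at $d=1$ (where $\tilde r=0$ and $\tilde q,s$ are nonnegative constants), and match coefficients to force $\tilde q=\varepsilon-1<0$. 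This is sound and arguably more in the spirit of the later sections, but it buys less: it establishes only non-exactness, not the value of $v_1(\varepsilon)$, and it leans on the attainment result, whereas the paper's one-line primal certificate is self-contained and quantitative. One caveat: at $\varepsilon=1$ your coefficient matching gives $\tilde q=0$, $s=1$, which is a \emph{valid} certificate, so the first relaxation is in fact exact there and the lemma as literally stated fails at that endpoint; your remark that this boundary case is ``harmless'' does not repair the statement (though the paper's own proof has the same blind spot, since $\varepsilon-1<0$ also fails at $\varepsilon=1$). It would be cleaner to state the conclusion for $\varepsilon\in(0,1)$.
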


\begin{proof}
The first relaxation always yields the strict lower bound $v_1=\varepsilon-1< v^*(\varepsilon)=0$. Indeed equation \eqref{sos} holds for $q=0$, $r=1-\varepsilon$, $s=1$ and we can use the moment dual to prove that this is optimal: the first  moment relaxation
    for \eqref{univpara}
    writes 
    \begin{equation*}
        \begin{array}{rrl}
    v_1(\varepsilon) = & \min_{y \in \R^2} & y_1 \\
    & \mathrm{s.t.} & 
    \left(\begin{array}{cc}
    1 & y_1 \\
    y_1 & y_2
    \end{array}\right) \succeq 0 \\
    & & 1 - y_2 \geq 0 \\
    & & y_1 + (1-\varepsilon)y_2 \geq 0, \\
\end{array}
    \end{equation*}
    and admits $y = (\varepsilon-1, 1)$
    as a feasible point for any $\varepsilon \in [0,1]$. It follows that
    $v_1(\varepsilon) \leq \varepsilon-1$.
\end{proof}

\subsubsection{Second relaxation}
{
\begin{lemma}
\label{lemma:eps2}
$\varepsilon_2 = 1-\sqrt{3}/2 \approx 0.1340$.
\end{lemma}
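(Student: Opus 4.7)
My plan is to prove the two matching inequalities $\varepsilon_2 \leq 1-\sqrt{3}/2 \leq \varepsilon_2$.

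For the upper bound I simply exhibit an explicit SOS certificate at $\varepsilon = 1-\sqrt{3}/2$: direct expansion gives the identity
\[ x = \tfrac{3\sqrt{3}}{2}\, x^2(1-x^2) + (\sqrt{3}\,x-1)^2\bigl(x + \tfrac{\sqrt{3}}{2}\,x^2\bigr),\]
which fits the template~\eqref{sosred} at $d=2$ with $\tilde q = 0$, $\tilde r = \tfrac{3\sqrt{3}}{2}$, and $s(x) = (\sqrt{3}\,x-1)^2 \in \Sigma[x]_2$. Hence $\cM^{\star}(S_{\varepsilon})_{4}$ is nonempty at $\varepsilon = 1-\sqrt{3}/2$, and Lemma~\ref{lemma:cMstar} yields $\varepsilon_2 \leq 1-\sqrt{3}/2$.

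For the lower bound I parameterize an arbitrary degree-$2$ certificate. Set $\alpha := 1-\varepsilon$, and write $\tilde q(x) = q_0 + q_1 x + q_2 x^2$, $\tilde r = c$, and $s(x) = 1 + s_1 x + s_2 x^2$; the SOS property of these univariate quadratics amounts to $q_0, q_2, c, s_2 \geq 0$ together with the discriminant inequalities $q_1^2 \leq 4q_0q_2$ and $s_1^2 \leq 4s_2$. Matching the coefficients of $x^2, x^3, x^4$ in~\eqref{sosred} gives the three linear relations
\[ s_1 = -(q_0+c+\alpha), \quad q_1 = \alpha(q_0+c+\alpha) - s_2, \quad q_2 = c - \alpha s_2, \]
which eliminate $s_1, q_1, q_2$ in favor of $q_0, c, s_2$. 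Setting $\beta := \sqrt{1-\alpha^2}$ and combining $s_1^2 \leq 4s_2$ with $s_2 \leq c/\alpha$ (from $q_2 \geq 0$) produces $(q_0+c+\alpha)^2 \leq 4c/\alpha$, which together with $q_0 \geq 0$ forces $c \leq (1+\beta)^2/\alpha$. A complementary analysis of the remaining inequality $q_1^2 \leq 4q_0q_2$, viewed as a quadratic inequality in $s_2$ whose discriminant must be nonnegative, reduces (when $q_0 > 0$) after simplification to $q_0[c(1-\alpha^2) - \alpha^3] \geq 0$, i.e.\ $c \geq \alpha^3/(1-\alpha^2)$; the degenerate case $q_0 = 0$ forces $q_1 = 0$ and yields the same bound directly. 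Compatibility of the two bounds on $c$ becomes $(1+\beta)^2/\alpha \geq \alpha^3/(1-\alpha^2)$, which using $\alpha^4 = (1-\beta)^2(1+\beta)^2$ collapses to $\beta \geq 1/2$, i.e.\ $\alpha \leq \sqrt{3}/2$ and hence $\varepsilon \geq 1-\sqrt{3}/2$.

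The main obstacle is the clean derivation of the necessary lower bound $c \geq \alpha^3/(1-\alpha^2)$, which comes from a discriminant-of-a-discriminant calculation and must also cover the degenerate case $q_0 = 0$ where the quadratic in $s_2$ collapses. Once this step is in place, the two bounds on $c$ match precisely at the threshold $\alpha^2 = 3/4$, which combined with the explicit certificate above yields the claimed value $\varepsilon_2 = 1-\sqrt{3}/2$.
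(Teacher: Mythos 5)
Your proof is correct, and while your upper-bound half coincides with the paper's (the paper solves the same linear system from \eqref{sosred} with $\tilde{q}=0$, reads off the SOS condition $4s_2\geq s_1^2$, i.e.\ $-4\varepsilon^2+8\varepsilon-1\geq 0$, and lands on the same certificate $\tilde{r}=3\sqrt{3}/2$, $s=(1-\sqrt{3}x)^2$ before invoking Lemma~\ref{lemma:cMstar}), your lower-bound half takes a genuinely different route. The paper works on the moment side: it writes the order-$2$ moment relaxation, characterizes exactness via the KKT conditions at the candidate optimum $\bar{y}=(1,0,0,0,0)$, reduces this to a small SDP feasibility system, and applies the conic Farkas lemma, exhibiting an explicit solution of the alternative system for every $\eta=1-\varepsilon\in(\sqrt{3}/2,1)$ to certify infeasibility. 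You instead stay entirely on the SOS side: you parameterize all degree-$2$ certificates of the reduced form \eqref{sosred}, eliminate $s_1,q_1,q_2$, and extract two incompatible necessary bounds on the single remaining multiplier $c=\tilde{r}$ via discriminants of univariate quadratics. I checked the key step: the solvability in $s_2$ of $q_1^2\leq 4q_0q_2$ does reduce to $16\,q_0\,[c(1-\alpha^2)-\alpha^3]\geq 0$, the degenerate case $q_0=0$ gives the same bound through $q_2\geq 0$, and the compatibility condition $\alpha^3/(1-\alpha^2)\leq(1+\beta)^2/\alpha$ does collapse to $(1-\beta)^2\leq\beta^2$, i.e.\ $\beta\geq 1/2$. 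Your argument is more elementary and self-contained --- no SDP duality, KKT, or theorem of the alternative --- and it addresses the definition \eqref{eq:varepsilon_d} of $\varepsilon_2$ directly rather than through non-exactness of the moment relaxation; the paper's machinery is heavier but is the kind that could in principle be pushed to higher orders, where the everything-is-a-discriminant trick is no longer available. The only point worth tightening is the boundary case $\varepsilon=0$, where $1-\alpha^2=0$ and your bound $c\geq\alpha^3/(1-\alpha^2)$ degenerates: there the condition $q_0[c(1-\alpha^2)-\alpha^3]\geq 0$ forces $q_0=0$, and then $q_2=c(1-\alpha^2)-\alpha^3=-1<0$, so infeasibility still holds, but this should be stated explicitly rather than folded into the division by $1-\alpha^2$.
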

}
\begin{proof}
We first prove that $\revise{\varepsilon}_2 \leq 1-\sqrt{3}/2$. 
Denoting ${\tilde{q}}(x)=q_0+q_1x+q_2x^2$, ${\tilde{r}}(x)=r_0$, $s(x)=1+s_1x+s_2x^2$ and identifying like powers of $x$ in equation \eqref{sosred} yields the linear system of equations
\[
\left(\begin{array}{ccc}
1 & 1 & 0 \\
0 & 1-\varepsilon & 1\\
-1 & 0 & 1-\varepsilon
\end{array}\right)
\left(\begin{array}{c}
r_0 \\ s_1 \\ s_2 
\end{array}\right)
+ \left(\begin{array}{c}
q_0 \\ q_1 \\ q_2 
\end{array}\right)=
\left(\begin{array}{c}
-1+\varepsilon \\ 0 \\ 0 
\end{array}\right).
\]
Letting $q_0=q_1=q_2=0$, it holds
\[
\left(\begin{array}{c}
r_0 \\ s_1 \\ s_2 
\end{array}\right)=
\frac{1}{\varepsilon(2-\varepsilon)}
\left(\begin{array}{c}
(1-\varepsilon)^3 \\ -1+\varepsilon \\(1-\varepsilon)^2 
\end{array}\right).
\]
The polynomial $s$ is SOS if and only if $4 s_2 \geq s^2_1$ i.e. $-4\varepsilon^2+8\varepsilon-1=-4(\varepsilon - 1 - \sqrt{3}/2)(\varepsilon - 1 + \sqrt{3}/2)\geq 0$. It follows that the second relaxation is exact if $\varepsilon \geq 1-\sqrt{3}/2$. When $\varepsilon = 1-\sqrt{3}/2$, it holds $\revise{\tilde{r}}(x)=3\sqrt{3}/2$ and $s(x)=(1-\sqrt{3}x)^2$. 
{This implies that $\varepsilon_2 \leq 1 - \sqrt{3}/2$ by Lemma \ref{lemma:cMstar}. }

{
Next we prove that $\varepsilon_2 \geq 1-\sqrt{3}/2$. 
Let $\varepsilon \in (0, 1]$. The second-order
moment relaxation for~\eqref{univpara} 
is given by
\begin{equation}\label{eq:mom2}
\begin{array}{rcrl}
v_2(\varepsilon) & = & \min_{y \in \RR^5} & y_1 \\
&&\mathrm{s.t.} & M_2(y) \succeq 0 ,\,  M_1(gy) \succeq 0 ,\, M_1(g_\varepsilon y) \succeq 0 ,\, y_0 = 1 ,\, 
\end{array}
\end{equation}
where $M_2(y)$ is the second-order moment matrix:
\begin{equation}
M_1(gy) = \begin{pmatrix}
y_0 - y_2 & y_1 - y_3 \\
y_1 - y_3 & y_2 - y_4
\end{pmatrix}
\quad \text{ and } \quad
M_1(g_\varepsilon y) = \begin{pmatrix}
y_1 + (1-\varepsilon)y_2 & y_2 + (1-\varepsilon)y_3 \\
y_2 + (1-\varepsilon)y_3 & y_3 + (1-\varepsilon)y_4
\end{pmatrix} .	
\end{equation}

From \cite{jh16}, due to the ball constraint in~\eqref{univpara},
there exists a solution $y^\star$ to the moment relaxation~\eqref{eq:mom2}
and strong duality holds between~\eqref{eq:mom2} and its Lagrangian dual, the SOS relaxation. 
Moreover, from Lemma~\ref{le:compactness}, there exists a
solution $(t^\star, X^\star)$ to the SOS relaxation. 
Thus, introducing the Lagrangian 
\begin{equation}
	L(y, t, X) = y_1 - t(y_0-1) - \proscal{M_2(y)}{X_0} - \proscal{M_1(gy)}{X_1} 
	- \proscal{M_1(g_\varepsilon y)}{X_2},
\end{equation} 
an optimal primal-dual solution $(y^\star, t^\star, X^\star)$
satisfies the necessary KKT conditions
\begin{equation}
	\label{eq:KKT}
	\begin{cases}
	y^\star \text{ is primal feasible,} \\
	(t^\star, X^\star) \text{ is dual feasible,} \\
	\nabla_yL(y^\star, t^\star, X^\star) = 0, \\
	M_2(y)X_0 = 0 ,\, M_1(gy)X_1 = 0 ,\, 
	M_1(g_\varepsilon y)X_2 = 0 ,\,
	\end{cases}
\end{equation}
which are in fact also sufficient optimality conditions
since the primal problem is convex.

As a first remark, we observe that $\bar{y}=(1, 0, 0, 0, 0)$ is always feasible in~\eqref{eq:mom2}, hence $v_2(\varepsilon) \leq 0$.
Thus, since $v^*(\varepsilon) = 0$, 
the moment-SOS relaxation is exact
if and only if $\bar{y}$ is a solution of~\eqref{eq:mom2}.
We now reformulate as follows: the second relaxation
is exact if and only if there exists a $X$
such that $(\bar{y}, 0, X)$ satisfies the KKT conditions~\eqref{eq:KKT}.

Let us consider~\eqref{eq:KKT}.
Clearly $\bar{y}$ is primal feasible:
$M_{2}(\bar{y})X_0 = 0$ and $M_{1}(g\bar{y})X_1 = 0$
is equivalent to
$X_0^{1,1} = X_0^{1,2} = X_0^{1,3} = 0$ and 
$X_1^{1,1} = X_1^{1,2} = 0$.
Expanding the condition 
$\nabla_yL(\bar{y}, 0, X^\star) = 0$
and introducing $\eta = 1-\varepsilon$,
we conclude that $(\bar{y}, 0, X)$ satisfies the KKT conditions~\eqref{eq:KKT}
if and only if
\begin{equation}
	\begin{cases}
	X_2^{1,1} = 1 \\
	X_0^{2,2} + X_1^{2,2} + \eta X_2^{1,1} + 2X_2^{1,2} = 0 \\
	2X_0^{2,3} + 2\eta X_2^{1,2} + X_2^{2,2} = 0 \\
	X_0^{3,3} - X_1^{2,2} + \eta X_2^{2,2} = 0
	\end{cases} 
	\begin{pmatrix}
	X_0^{2,2} & X_0^{2,3} \\
	X_0^{2,3} & X_0^{3,3}
	\end{pmatrix} \succeq 0 \,, X_1^{2,2} \geq 0
	\,, X_2 \succeq 0 \,,
\end{equation}
which can be further simplified, observing
that $X_1^{2,2} = X_0^{3,3} + \eta X_2^{2,2} \geq 0$, to obtain that $(\bar{y}, 0, X)$ satisfies the KKT conditions~\eqref{eq:KKT}
if and only if
\begin{equation}
\label{eq:SDPfeasibility}
	\begin{cases}
	X_2^{1,1} = 1 \\
	X_0^{2,2} + X_0^{3,3} + \eta X_2^{2,2} + \eta X_2^{1,1} + 2X_2^{1,2} = 0 \\
	2X_0^{2,3} + 2\eta X_2^{1,2} + X_2^{2,2} = 0 	
	\end{cases} 
	\begin{pmatrix}
	X_0^{2,2} & X_0^{2,3} \\
	X_0^{2,3} & X_0^{3,3}
	\end{pmatrix} \succeq 0
	\,, X_2 \succeq 0 \,.
\end{equation}

We introduce the linear operator 
$\mathcal{A} : \mathbb{S}_2 \times \mathbb{S}_2 \to \RR^3$
and the cone $\mathcal{K} = \mathbb{S}_2^+ \times \mathbb{S}_2^+$
which let us write the system~\eqref{eq:SDPfeasibility}
as $\mathcal{A}(Z) = b$ and $Z \in \mathcal{K}$.
We also introduce 
the system $\mathcal{A}^*(u) \in \mathcal{K}^*$
and $\proscal{u}{b} < 0$, which gives
\begin{equation}
	\label{eq:SDPfeasibilityBis}
	\begin{pmatrix}
	u_2 & u_3 \\
	u_3 & u_2
	\end{pmatrix} \succeq 0
	\,,
	\begin{pmatrix}
	u_1 + \eta u_2 & u_2 + \eta u_3 \\
	u_2 + \eta u_3 & \eta u_2 + u_3
	\end{pmatrix} \succeq 0
	\,, u_1 < 0 \,.
\end{equation}
From Farkas lemma for cones, either~\eqref{eq:SDPfeasibility}
is limit-feasible or~\eqref{eq:SDPfeasibilityBis}
has a solution, but not both \cite[Lemma 4.5.6]{gm12}. 

Let $\mathcal{U}_\eta$ denote the set 
defined by~\eqref{eq:SDPfeasibilityBis},
and let us investigate under which conditions it is empty when $\eta \in [0, 1)$.
First, we observe that if $u \in \mathcal{U}_\eta$,
the polynomial
\begin{equation}
	(\eta^2-1)u_2^2 - \eta^2u_3^2 - \eta u_2u_3
	+ u_1(\eta u_2 + u_3)
	=
	\text{det}\begin{pmatrix}
	u_1 + \eta u_2 & u_2 + \eta u_3 \\
	u_2 + \eta u_3 & \eta u_2 + u_3
	\end{pmatrix}
\end{equation}
must have all its terms except $- \eta u_2u_3$
nonpositive: we deduce that $u_3 \leq 0$.
Then, we obtain
\begin{align}
	\label{eq:3Dsys}%
	u \in \mathcal{U}_\eta
	\iff
	\begin{cases}
	u_1 < 0, u_2 \geq 0, u_3 \leq 0 \,, \\
	u_2 \geq -u_3 \,, \\
	u_1 + \eta u_2 \geq 0 \,, \\
	\eta u_2 + u_3 \geq 0 \,, \\
	(u_1 + \eta u_2)(\eta u_2 + u_3) - (u_2 + \eta u_3)^2 \geq 0 \,.
	\end{cases}
\end{align}
As $\mathcal{U}_\eta$ is a cone,
it is nonempty if and only if
its intersection with the $\ell_1$-ball
is nonempty: we can thus enforce $-u_1 + u_2 - u_3 = 1$ in~\eqref{eq:3Dsys}
to claim that
\begin{align}
	\label{eq:2Dsys}%
	\mathcal{U}_\eta \neq \emptyset
	&\iff \exists v \in \RR^2 \,,
	\begin{cases}
		v_2 \geq 0, v_3 \geq 0 \,, \\
		v_2 + v_3 - 1 < 0 \,, \\
		(1+\eta)v_2 + v_3 - 1 \geq 0 \,, \\
		\eta v_2 - v_3 \geq 0 \,, \\
		((1+\eta)v_2 + v_3 - 1)(\eta v_2 - v_3)
		- (v_2 - \eta v_3)^2 \geq 0 \,.
	\end{cases} 
\end{align}
We now introduce
$\bar{v}_2(\eta) = \frac{\eta(1+2\eta)}{2(\eta+1)}$
and $\bar{v}_3(\eta) = 1 - \bar{v}_2(\eta)$
which satisfy, for any $\eta \in (\sqrt{3}/2, 1)$:
\begin{equation}
\begin{cases}
\bar{v}_2(\eta) > 0, \bar{v}_3(\eta) > 0 \,, \\
\bar{v}_2(\eta) + \bar{v}_3(\eta) - 1 = 0 \,, \\
(1+\eta)\bar{v}_2(\eta) + \bar{v}_3(\eta) - 1 > 0 \,, \\
\eta \bar{v}_2(\eta) - \bar{v}_3(\eta) > 0 \,, \\
((1+\eta)\bar{v}_2(\eta) + \bar{v}_3(\eta) - 1)(\eta \bar{v}_2(\eta) - \bar{v}_3(\eta))
- (\bar{v}_2(\eta) - \eta \bar{v}_3(\eta))^2 = \frac{\eta^2(4\eta^2-3)}{4(\eta + 1)} > 0 \,.
\end{cases} 
\end{equation}

We deduce that, for any $\eta \in (\sqrt{3}/2, 1)$,
there exists a small enough perturbation
$\xi_\eta > 0$
such that $(\bar{v}_2(\eta) - \xi_\eta, \bar{v}_3(\eta) - \xi_\eta)$
is a solution of the system in~\eqref{eq:2Dsys}.
It follows that $\mathcal{U}_\eta \neq \emptyset$,
which implies that~\eqref{eq:SDPfeasibility} is infeasible,
and therefore that the second order relaxation is not exact.

We conclude that $\varepsilon_2 \geq 1 - \sqrt{3} / 2$, yielding the desired result.}
\end{proof}

\subsubsection{Third relaxation}

\begin{lemma}
\label{lemma:eps3}
$\varepsilon_3 \leq 1-\frac{\sqrt{3+12 \sqrt{10}\, \sin \! \left(\frac{\arctan \left(3 \sqrt{111}\right)}{3}+\frac{\pi}{6}\right)}}{6} \approx 4.712527\cdot 10^{-3}.$
\end{lemma}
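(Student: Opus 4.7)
My plan follows the strategy used in the proof of Lemma on $\varepsilon_2$: exhibit an explicit SOS certificate in~\eqref{sosred} at order $d = 3$ for the claimed value of $\varepsilon$, then invoke Lemma~\ref{lemma:cMstar} to extend to all larger $\varepsilon$. First, I would parametrize $\tilde{q}(x) \in \Sigma[x]_4$, $\tilde{r}(x) \in \Sigma[x]_2$, and $s(x) \in \Sigma[x]_4$ with $s(0) = 1$, and identify coefficients of $x^k$ for $k = 2, \dots, 6$ in~\eqref{sosred}. Trying the ansatz $\tilde{q} \equiv 0$ (successful in the $\varepsilon_2$ case) collapses the identity to five linear equations in the seven coefficients $r_0, r_1, r_2, s_1, s_2, s_3, s_4$; solving for the $r_i$ in terms of the $s_i$ and $\varepsilon$, two degrees of freedom remain, subject to a single scalar relation between $s_1 + s_3$ and $\eta = 1 - \varepsilon$.

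I would then impose the SOS constraints: $\tilde{r} \in \Sigma[x]_2$ reduces to $r_0, r_2 \geq 0$ and $4 r_0 r_2 \geq r_1^2$, while $s \in \Sigma[x]_4$ is equivalent to positive semidefiniteness of the associated $3 \times 3$ Gram matrix (or, since $s$ is univariate, to $s \geq 0$ on $\R$). Minimizing $\varepsilon$ over the two remaining free parameters under these constraints, complementary slackness at the optimum should force $\tilde{r}$ to become a perfect square ($4 r_0 r_2 = r_1^2$) and $s$ to acquire a real double root (its Gram matrix becoming rank-deficient, or $s = \alpha(x)^2 + \beta(x)^2$ with a common real zero of $\alpha, \beta$). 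Combining these boundary relations with the linear system, all coefficients become algebraic functions of $\varepsilon$ alone.

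Eliminating the remaining parameters, the minimality condition should reduce to a single cubic equation in $u = (1 - \varepsilon)^2$, which given the shape of the stated bound should be $64 u^3 - 16 u^2 - 52 u + 5 = 0$. I would then solve this cubic by the classical trigonometric method for the casus irreducibilis: depressing via $u = v + \tfrac{1}{12}$ yields $v^3 - \tfrac{5}{6} v + \tfrac{1}{108} = 0$, whose three real roots read $v_k = \tfrac{\sqrt{10}}{3}\cos\bigl(\tfrac{1}{3}\arccos(-\sqrt{10}/100) - \tfrac{2k\pi}{3}\bigr)$. Using $1 + 9 \cdot 111 = 1000$ to rewrite $\arccos(\sqrt{10}/100) = \arctan(3\sqrt{111})$, and applying $\cos(\pi/3 - \theta) = \sin(\pi/6 + \theta)$ on the $k = 0$ branch, gives $u = \tfrac{1}{36}\bigl(3 + 12\sqrt{10}\sin(\arctan(3\sqrt{111})/3 + \pi/6)\bigr)$, whence $1 - \varepsilon = \sqrt{u}$ matches the stated bound; Lemma~\ref{lemma:cMstar} then concludes $\varepsilon_3 \leq 1 - \sqrt{u}$.

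The main obstacle is the quartic SOS condition on $s$. The $d = 2$ case collapsed to the single discriminant inequality $4 s_2 \geq s_1^2$, saturated by the clean square $s = (1 - \sqrt{3}x)^2$; for $d = 3$ the boundary of the quartic SOS cone is richer, characterized either by rank-deficiency of a $3 \times 3$ Gram matrix or by a sum-of-two-squares decomposition $s = \alpha(x)^2 + \beta(x)^2$ with a common real root. Pushing the elimination through to arrive at the cubic $64 u^3 - 16 u^2 - 52 u + 5 = 0$, and then matching its trigonometric resolution against the exact expression in the statement, is where the algebraic heart of the proof sits.
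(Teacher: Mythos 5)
Your proposal follows essentially the same route as the paper: the ansatz $\tilde q \equiv 0$ with $\tilde r$ and $s$ taken as perfect squares (rank-one Gram matrices), elimination down to an algebraic equation in $\varepsilon$, trigonometric resolution of the resulting cubic, and Lemma~\ref{lemma:cMstar} to conclude. Your conjectured cubic $64u^3-16u^2-52u+5=0$ in $u=(1-\varepsilon)^2$ is exactly the relevant factor $64\varepsilon^6-384\varepsilon^5+944\varepsilon^4-1216\varepsilon^3+812\varepsilon^2-216\varepsilon+1$ of the paper's Gr\"obner-basis eliminant rewritten in the variable $(1-\varepsilon)^2$, and your depressed cubic, branch selection ($k=0$, giving the largest root $u\approx 0.991$), and the identities $\arccos(\sqrt{10}/100)=\arctan(3\sqrt{111})$ and $\cos(\pi/3-\theta)=\sin(\pi/6+\theta)$ all check out against the stated bound.
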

\begin{proof}
Let ${\tilde{q}}(x)=0$, ${\tilde{r}}(x)=r_0+r_1 x +r_2 x^2$, $s(x)=1+s_1x+s_2x^2+s_3x^3+s_4x^4$, identifying like powers of $x$ in equation \eqref{sosred} yields the linear system of equations
\[
\left(\begin{array}{ccc|cccc}
1 & 0 & 0 & 1 & 0 & 0 & 0  \\
0 & 1 & 0 & 1-\varepsilon & 1 & 0 & 0 \\
-1 & 0 & 1 & 0 & 1-\varepsilon & 1 & 0\\
0 & -1 & 0 & 0 & 0 & 1-\varepsilon & 1 \\
0 & 0 & -1 & 0 & 0 & 0 & 1-\varepsilon
\end{array}\right)
\left(\begin{array}{c}
r_0 \\ r_1 \\ r_2 \\ \hline s_1  \\ s_2 \\ s_3 \\ s_4 
\end{array}\right)=
\left(\begin{array}{c}
-1+\varepsilon \\  0 \\ 0 \\ 0  \\ 0
\end{array}\right) ,
\]
whose solutions can be parametrized with $s_3$ and $s_4$ as
\begin{equation}\label{param3}
\left(\begin{array}{c}
r_0 \\ r_1 \\ r_2 \\ \hline s_1  \\ s_2
\end{array}\right)
=
\left(\begin{array}{c}
\frac{(1-\varepsilon)^3}{\varepsilon(2-\varepsilon)} + s_3 \\ (1-\varepsilon)s_3 + s_4 \\ (1-\varepsilon)s_4 \\ \hline \frac{-1+\varepsilon}{\varepsilon(2-\varepsilon)} - s_3 \\ \frac{(1-\varepsilon)^2}{\varepsilon(2-\varepsilon)} - s_4
\end{array}\right).
\end{equation}
The polynomial ${\tilde{r}}$ is SOS if and only if
\[
\left(\begin{array}{cc}
\frac{(1-\varepsilon)^3}{\varepsilon(2-\varepsilon)} + s_3 &
\frac{1-\varepsilon}{2} s_3 + \frac{1}{2}s_4 \\
\frac{1-\varepsilon}{2} s_3 + \frac{1}{2}s_4 & 
(1-\varepsilon)s_4
\end{array}\right) \succeq 0 ,
\]
and the polynomial $s$ is SOS if and only if
\[
\left(\begin{array}{ccc}
1 & \frac{-1+\varepsilon}{2\varepsilon(2-\varepsilon)} - \frac{1}{2}s_3 & z \\
\frac{-1+\varepsilon}{2\varepsilon(2-\varepsilon)} - \frac{1}{2}s_3 & 
\frac{(1-\varepsilon)^2}{\varepsilon(2-\varepsilon)} - s_4 -2z & \frac{1}{2}s_3 \\
z & \frac{1}{2}s_3 & s_4
\end{array}\right) \succeq 0 ,
\]
for some $z \in \R$.



Assume that $s(x)=(1+ax+bx^2)^2$ and ${\tilde{r}}(x)=(c+dx)^2$ for $a,b,c,d \in \R$, i.e., the above positive semidefinite matrices have rank one. So let us express system \eqref{param3} in terms of the coefficients $a,b,c,d$:
\[
\left(\begin{array}{c}
c^2 \\ 2cd \\ d^2 \\ \hline 2a  \\ 2b+a^2
\end{array}\right)
=
\left(\begin{array}{c}
\frac{(1-\varepsilon)^3}{\varepsilon(2-\varepsilon)} + 2ab \\ (1-\varepsilon)2ab + b^2 \\ (1-\varepsilon)b^2 \\ \hline \frac{-1+\varepsilon}{\varepsilon(2-\varepsilon)} - 2ab \\ \frac{(1-\varepsilon)^2}{\varepsilon(2-\varepsilon)} - b^2
\end{array}\right).
\]
Clearing up denominators, this is a system of 5 polynomial equations in 5 unknowns $a,b,c,d,\varepsilon$. Using Maple's Groebner basis engine we can eliminate $a,b,c,d$ and obtain the following polynomial
\[
(64\varepsilon^6 - 384\varepsilon^5 + 944\varepsilon^4 - 1216\varepsilon^3 + 812\varepsilon^2 - 216\varepsilon + 1)(4\varepsilon^2 - 8\varepsilon + 1)^5(\varepsilon - 1)^8,
\]
vanishing at each solution. The first factor is 
a degree 6 polynomial with 4 real roots. The Galois group of this polynomial allows the roots to be expressed with radicals. The smallest positive real root is
\[
1-\sqrt{ \frac{1}{12} + \frac{5}{3}\alpha^{-\frac{1}{3}} + \frac{1}{6}\alpha^{\frac{1}{3}}},\quad \alpha = -1+3i\sqrt{111} ,
\]
which can be expressed with trigonometric functions as in the statement of the lemma. For this value of $\varepsilon$ we have $a \approx -6.9296$, $b \approx 6.6375$, $c \approx -3.5866$, $d \approx 6.6219$. 
{The desired result follows from Lemma \ref{lemma:cMstar}. }
\end{proof}


\subsection{Higher relaxations}
\label{sec:higher}
The aim of this section is to provide quantitative enclosures of $\varepsilon_d$ for arbitrary relaxation orders $d$. 
Our main result is as follows. 
\begin{theorem}
\label{th:bounds}
For all $d \in \NN$, one has 
$(1+2d (4 e)^{2d})^{-1}  \leq \varepsilon_{d+1} \leq 4^{-d}$.
\end{theorem}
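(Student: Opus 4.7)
The plan is to prove the two bounds by independent arguments based on the reduced identity \eqref{sosred}.

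\textit{Upper bound $\varepsilon_{d+1}\le 4^{-d}$.} I would exhibit an explicit member of $\cM^{\star}(S_{4^{-d}})_{2(d+1)}$. Set $\varepsilon := 4^{-d}$ and take the ansatz $s(x):=(1-x)^{2d}\in\Sigma[x]_{2d}$, which satisfies $s(0)=1$ and $s(-1)=4^d=\varepsilon^{-1}$. Consider $\psi(x):=s(x)(1+(1-\varepsilon)x)$; a direct computation gives
\begin{equation*}
\psi'(x) = (1-x)^{2d-1}\bigl[(1-\varepsilon - 2d) - (2d+1)(1-\varepsilon)x\bigr].
\end{equation*}
The bracket is affine, negative at $x=0$ and (for $d\ge 1$) positive at $x=-1$, so $\psi$ is monotone decreasing on $[0,1]$ (hence $\psi\le 1$ there) and first increasing, then decreasing on $[-1,0]$ from $\psi(-1)=\psi(0)=1$ (hence $\psi\ge 1$ there). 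Consequently $\phi(x):=x(1-\psi(x))$ is a polynomial of degree $2(d+1)$ nonnegative on $[-1,1]$ with a double zero at $0$. Invoking Luk\'acs's representation of nonnegative polynomials on $[-1,1]$ yields $\phi(x)=x^2\tilde A(x)^2 + x^2(1-x^2)\tilde B(x)^2$ with $\tilde A\in\R[x]_d$ and $\tilde B\in\R[x]_{d-1}$; rearranging and multiplying by $x$ produces the identity \eqref{sosred} for $(\tilde A^2,\tilde B^2,s)$, yielding the bound.

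\textit{Lower bound $\varepsilon_{d+1}\ge (1+2d(4e)^{2d})^{-1}$.} By {Proposition}~\ref{th:varepsilon_d}, fix any $(\tilde q,\tilde r,s)\in\cM^{\star}(S_{\varepsilon_{d+1}})_{2(d+1)}$. Dividing \eqref{sosred} by $x$ gives
\begin{equation*}
1 = x\tilde q(x) + x(1-x^2)\tilde r(x) + s(x)(1+(1-\varepsilon_{d+1})x), \quad s(0) = 1.
\end{equation*}
On $[0,1]$ each term is nonnegative and they sum to $1$, so $s(x)\le 1$ there. Evaluating at $x=-1$ yields $\varepsilon_{d+1}\,s(-1) = 1+\tilde q(-1)\ge 1$, hence $s(-1)\ge 1/\varepsilon_{d+1}$. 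The remaining task is to upper-bound $s(-1)$ using only $s\in\Sigma[x]_{2d}$ and $\|s\|_{L^\infty[0,1]}\le 1$.

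Write $s=\sum_{i=1}^N p_i^2$ with $N\le d+1$ and $\deg p_i\le d$, obtained by spectral factorization of the $(d+1)\times(d+1)$ PSD Gram matrix associated with $s$; since each $p_i^2\le s\le 1$ on $[0,1]$, we have $|p_i|\le 1$ on $[0,1]$. The central and most delicate step is a Markov-type evaluation bound: for every $p\in\R[x]_d$ with $\|p\|_{L^\infty[0,1]}\le 1$, $|p(-1)|\le (4e)^d$. This can be derived by expanding $p$ in Taylor series at $0$ and combining Markov brothers' inequality on $[0,1]$, $|p^{(k)}(0)|\le 2^k T_d^{(k)}(1)\,\|p\|_{L^\infty[0,1]}$, with $T_d^{(k)}(1)\le d^{2k}/(2k-1)!!$, which reduces the sum to $\sum_k (2d)^{2k}/(2k)! = \cosh(2d)$ and then trades $\cosh(2d)\le e^{2d}\le (4e)^d$ for clean constants. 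Squaring and summing over $i\le N\le d+1$ yields
\begin{equation*}
1/\varepsilon_{d+1}\le s(-1) = \sum_{i=1}^N p_i(-1)^2 \le (d+1)(4e)^{2d}\le 1+2d(4e)^{2d}
\end{equation*}
(the last inequality being elementary for $d\ge 1$), and rearranging gives the bound. The main obstacle is the Markov-type estimate: looser than the sharp Chebyshev value $T_d(3)$, the form $(4e)^d$ is nevertheless what couples elegantly with the Carath\'eodory count $N\le d+1$ to produce the stated constants.
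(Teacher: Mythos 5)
Your proof is correct. The upper bound is essentially the paper's argument: Proposition~\ref{prop:Paulynomial} uses the same polynomial $s=(ax-1)^{2d}$ (specialized to $a=1$ for Theorem~\ref{th:bounds}), verifies \eqref{ineq} by the same sign analysis of the derivative of $(ax-1)^{2d}(1+(1-\varepsilon)x)$, and passes from \eqref{ineq} to the certificate by citing \cite{fekete35}, which is exactly the Luk\'acs representation you invoke; your extraction of the factor $x^2$ from $A$ and $B$ reproduces the paper's reduction to \eqref{sosred}. The lower bound, however, reaches the same constant by a genuinely different key estimate. Both arguments reduce to bounding $s(-1)$ for $s\in\Sigma[x]_{2d}$ with $s(0)=1$ and $s\le 1$ on $[0,1]$, but the paper invokes the coefficient bound $|s_k|\le (4e)^{2d}\max_{0\le j\le 2d}s(j/(2d))$ from \cite[Lemma~4.1]{sh12} and sums the coefficients, whereas you factor $s=\sum_i p_i^2$ with each $|p_i|\le 1$ on $[0,1]$ and extrapolate each $p_i$ to $x=-1$ by a V.~A.~Markov/Chebyshev growth bound; your chain $\sum_k (2d)^{2k}/(2k)!\le\cosh(2d)\le e^{2d}\le (4e)^d$ checks out, as does the final comparison $(d+1)(4e)^{2d}\le 1+2d(4e)^{2d}$. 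Your route is self-contained (modulo Markov's inequality) and intrinsically sharper: with two squares and the exact Chebyshev constant $T_d(3)$ one would get $s(-1)\le 2\,T_d(3)^2=O\bigl((3+2\sqrt{2})^{2d}\bigr)$ in place of $(4e)^{2d}$, hence a better lower bound on $\varepsilon_{d+1}$; you deliberately loosen this to match the stated constant. What the paper's citation of Sherstov buys is brevity and control of every coefficient of $s$ rather than a single evaluation outside $[0,1]$.
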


The upper bound follows from Proposition \ref{prop:Paulynomial} while the lower bound follows from Proposition \ref{prop:max_finite}. 
The remaining part of this section is dedicated to proving both propositions. 

We start with a preliminary discussion to provide insights to the reader. 
Notice that if \eqref{sos} holds with $v^\star(\varepsilon) = 0$, namely if for all $x \in \R$ 
\begin{equation}
\label{sos0}
x = q(x) + r(x) (1-x^2) + s(x) (x+(1-\varepsilon)x^2) ,
\end{equation}
for some $q \in \Sigma[x]_{2d}, r,s \in \Sigma[x]_{2(d-1)}$, then $s$ satisfies the following inequality  
\begin{equation}\label{ineq}
x-s(x)(x+(1-\varepsilon)x^2) \geq 0, \quad \text{ for all } x \in [-1,1]. 
\end{equation}
{Let us recall at this point that all nonnegative univariate polynomials with real coefficients are sums of squares of polynomials with real coefficients. }
Assume that there is a nonnegative polynomial $s \in \R[x]_{2(d-1)}$, or equivalently $s \in \Sigma[x]_{2(d-1)}$, satisfying \eqref{ineq}. 
Then it is well known from \cite{fekete35} that there exist $q \in \Sigma[x]_{2d}$ and $r \in \Sigma[x]_{2(d-1)}$ satisfying \eqref{sos0}. 
Therefore we shall restrict our attention to nonnegative polynomials $s$ satisfying \eqref{ineq}. 

\def\vareps{0.5}
\def\inveps{2}
\def\onehalfeps{0.666}

\begin{figure}
\begin{center}
\begin{tikzpicture}[xscale=3,yscale=2]
  \draw (-0.2,-0.2) node {$0$};
  \draw (-1,-0.2) node {$-1$};  \draw (-1,2pt) -- (-1,-2pt); \draw (1,-0.2) node {$1$};  \draw (1,2pt) -- (1,-2pt); \draw (-0.2,1) node {$1$};  \draw (-2pt,1) -- (2pt,1); 
  \draw (-0.2,\onehalfeps) node {$\frac{1}{2-\varepsilon}$};  \draw (-2pt,\onehalfeps) -- (2pt,\onehalfeps); 
  \draw (-0.2,\inveps) node {$\frac{1}{\varepsilon}$};  \draw (-2pt,\inveps) -- (2pt,\inveps); 
  \draw[dashed] (-2pt,\onehalfeps) -- (1,\onehalfeps); \draw[dashed] (1,-2pt) -- (1,\onehalfeps); 
  \draw[dashed] (-2pt,\inveps) -- (-1,\inveps); \draw[dashed] (-1,-2pt) -- (-1,\inveps);

  \draw[->] (-1.5,0) -- (1.5,0) node[right] {$x$};
  \draw[->] (0,-0.2) -- (0,2.2) node[above] {};

  \draw[color=black,domain=-1:1]    plot (\x,{1/(\x*(1-\vareps) + 1)});             
  \coordinate [label=$x \mapsto \frac{1}{1 + (1-\varepsilon) x}$] () at (0.7,0.9) ;
  \draw[color=black,line width=2pt,domain=-0.42:1]    plot (\x,{(\x- 1)^2});
  \coordinate [label=$s$] () at (0.4,0.1) ;
\end{tikzpicture}
\caption{Constraints on SOS polynomial $s$ (in bold).}
\label{fig:ineq}
\end{center}
\end{figure}
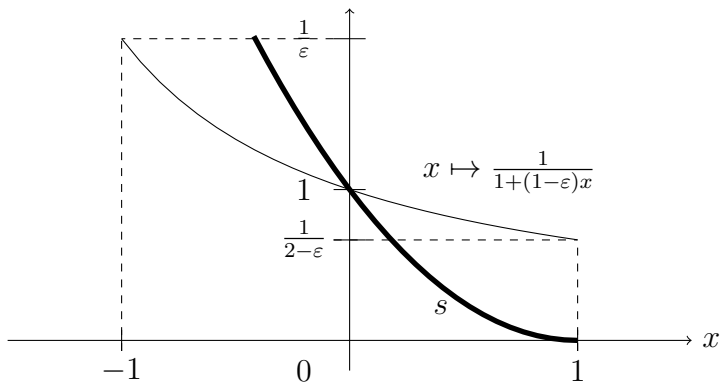

Assume first that $x \neq 0$. 
From \eqref{ineq}, if $x \in (0,1]$ then $s(x) \leq 1/(1+(1-\varepsilon)x)$, and if $x \in [-1,0)$ then $s(x) \geq 1/(1+(1-\varepsilon)x)$. 
By continuity of $s$, this implies that $s(0)=1$. If $x \in [0,1]$ then $1+(1-\varepsilon)x \in [1,2-\varepsilon]$ and $1/(1+(1-\varepsilon)x) \in [1/(2-\varepsilon),1]$. If $x \in [-1,0]$ then $1+(1-\varepsilon)x \in [\varepsilon,1]$ and $1/(1+(1-\varepsilon)x) \in [1,1/\varepsilon]$. 
It follows that $s$ should be above resp. below the hyperbola $x \mapsto 1/(1+(1-\varepsilon)x)$ for $x$ negative resp. positive, and in particular $s(-1) \geq 1/\varepsilon$, see Figure \ref{fig:ineq}.

A particular choice is
\[
s(x) = (ax-1)^{2d}
\]
for appropriate parameters $a \in \R$ and $d \in \N$. 
{An anonymous referee noticed that this polynomial is surprisingly similar to the one appearing in \cite[Eq.~(11)]{ns07}. 
}
In particular one should have $s(-1)=(a+1)^{2d} \geq 1/\varepsilon$ 
and $s(1)=(a-1)^{2d} \leq 1/(2-\varepsilon)$.  

\begin{proposition}
\label{prop:Paulynomial}
Let $\varepsilon \in (0,1]$, $a \in [1, 2)$ and $d \in \N$ be the {smallest integer} greater than 
$\frac{\log (\varepsilon^{-1})}{2 \log (1+a)}$ and 
$\frac{\log ((2 - \varepsilon)^{-1}) }{2 \log (a - 1)}$. 
Then the polynomial $s = (ax-1)^{2d}$ satisfies \eqref{ineq} and the relaxation \eqref{eq:SOS_d} of order $d+1$ is exact. 
\end{proposition}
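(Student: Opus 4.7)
The plan is to verify the pointwise polynomial inequality~\eqref{ineq} directly for the explicit choice $s(x) = (ax-1)^{2d}$ on $[-1,1]$, and then invoke the classical Fekete--Lukacs representation of nonnegative univariate polynomials on $[-1,1]$ to package everything into an SOS certificate at order $d+1$. Note that since $2d$ is even, $s = ((ax-1)^d)^2 \in \Sigma[x]_{2d}$ automatically, so the only real work is establishing~\eqref{ineq}.

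First I would rewrite the inequality to be proved as $x(1-\phi(x)) \geq 0$ on $[-1,1]$, where
\[
\phi(x) := (1-ax)^{2d}\bigl(1 + (1-\varepsilon)x\bigr),
\]
so that the task becomes $\phi(x) \leq 1$ for $x \in [0,1]$ and $\phi(x) \geq 1$ for $x \in [-1,0]$, with equality at $x=0$. Differentiation yields the convenient factorisation $\phi'(x) = -(1-ax)^{2d-1}(A+Bx)$ with $A := 2da - (1-\varepsilon) > 0$ and $B := a(1-\varepsilon)(2d+1) \geq 0$. A short sign analysis then shows that on $[0,1/a]$ one has $\phi' \leq 0$, on $[1/a,1]$ one has $\phi' \geq 0$, so that $\phi$ attains its maximum over $[0,1]$ at an endpoint; since $\phi(0)=1$ and $\phi(1)=(a-1)^{2d}(2-\varepsilon)$, the condition $\phi \leq 1$ on $[0,1]$ reduces to $(a-1)^{2d}(2-\varepsilon)\leq 1$, which—after taking logs and flipping the inequality because $\log(a-1)\leq 0$—is exactly the second hypothesis on $d$.

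Next I would repeat the analysis on $[-1,0]$, where the factor $(1-ax)^{2d-1}$ is positive and the linear factor $A+Bx$ vanishes at most once. Whether or not the root $x_0 = -A/B$ lies inside $[-1,0]$, $\phi$ is either monotone or admits a single interior local maximum, so its minimum is attained at an endpoint; since $\phi(0)=1$ and $\phi(-1)=(1+a)^{2d}\varepsilon$, the condition $\phi \geq 1$ on $[-1,0]$ reduces to $(1+a)^{2d}\varepsilon \geq 1$, which is exactly the first hypothesis on $d$. Combining both cases proves~\eqref{ineq}.

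Finally, once~\eqref{ineq} is established, the polynomial $P(x) := x - s(x)(x+(1-\varepsilon)x^2)$ has degree $2d+2$ and is nonnegative on $[-1,1]$, so by Lukacs' theorem (equivalently, the Fekete representation cited in~\cite{fekete35}) it decomposes as $P(x) = q(x) + r(x)(1-x^2)$ with $q \in \Sigma[x]_{2(d+1)}$ and $r \in \Sigma[x]_{2d}$, which are precisely the degree budgets allowed by~\eqref{eq:SOS_d} at order $d+1$. The triple $(q,r,s)$ then certifies~\eqref{sos0} with $v^\star(\varepsilon) = 0$, so the $(d+1)$-th relaxation is exact. The main obstacle in this plan is the case analysis on $[-1,0]$: one has to confirm that the unique critical point $x_0=-A/B$, when present, is a local maximum rather than a minimum, so that the extreme values of $\phi$ on the negative side are genuinely forced to the endpoints and produce the two clean bounds appearing in the hypothesis.
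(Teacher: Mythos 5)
Your proposal is correct and follows essentially the same route as the paper: the same factorisation of the derivative of $(ax-1)^{2d}(1+(1-\varepsilon)x)$, the same sign analysis forcing the extrema to the endpoints $-1$, $0$, $1$, and the same reduction of~\eqref{ineq} to the two endpoint inequalities $(1+a)^{2d}\varepsilon \geq 1$ and $(a-1)^{2d}(2-\varepsilon) \leq 1$, followed by the Fekete/Luk\'acs representation (which the paper invokes in the discussion just before the proposition rather than inside its proof). Your explicit check that the critical point on $[-1,0]$ is a local maximum, and your degree bookkeeping for the order-$(d+1)$ certificate, are both accurate.
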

\begin{proof}
Let $z:=1-\varepsilon$. 
To prove the claim, it is enough to show that $s$ should be above resp. below the hyperbola $x \mapsto 1/(1+zx)$ on the interval $[-1, 0]$ resp. $[0, 1]$, or equivalently that the polynomial function $x \mapsto f(x) = (ax-1)^{2d} (1+zx) - 1$ is above resp. below the $x$-axis on $[-1, 0]$ resp. $[0, 1]$. 
The derivative of $f$ is given by {
\begin{align*}
f'(x) & = 2d a (ax-1)^{2d-1} (1+zx) + z (ax-1)^{2d} \\
& = (ax-1)^{2d-1} [za(2d+1)x + 2da - z] \,.
\end{align*}
The first factor is below resp. above the $x$-axis on $[-1, a^{-1}]$ resp. $[a^{-1}, 1]$. 
The second factor is below resp. above the $x$-axis for $x \leq \frac{z-2da}{za(2d+1)}$ resp. $[\frac{z-2da}{za(2d+1)}, 1]$. 
Since $f(0)=0$ and $0 \in [\frac{z-2da}{za(2d+1)}, a^{-1}]$, one just needs to verify the sign of the values of $f$ at $-1$ and $1$. 
One has $f(-1) = (1+a)^{2d} (1 - z) - 1 = \varepsilon (1+a)^{2d} - 1$ and $f(1) = (a-1)^{2d} (1+z) - 1 = (a-1)^{2d} (2-\varepsilon) - 1$, thus by assumption one has $f(-1) \ge 0$ and $f(1) \le 0$, which yields the desired claim. }
\end{proof}
{
Proposition \ref{prop:Paulynomial} immediately implies the upper bound estimate from Theorem \ref{th:bounds} for the choice $a=1$ where $d = \lceil \frac{\log (\varepsilon^{-1})}{2 \log 2} \rceil $, yielding that $\varepsilon_d \leq 4^{-d+1}$ and thus $\varepsilon_{d+1} \leq 4^{-d}$. 
}
\begin{remark}
If one assumes that $\varepsilon \in \mathbb{Q}$, $\varepsilon>0$ then there is an algorithm computing an exact SOS decomposition~\eqref{sos0} with rational coefficients in  $\tilde{O}\left(\log(\frac{1}{\varepsilon})^4\right)$ {boolean operations}\footnote{$\tilde{O}(\cdot)$ stands for the variant of the big-$O$ notation ignoring logarithmic factors.}. 
Indeed from Proposition \ref{prop:Paulynomial} let us select $a=1$, $s = (x-1)^{2d}$ with $2 d = O\left( \log(\varepsilon^{-1}) \right)$, so that $t : x \mapsto x - s(x) (x + (1-\varepsilon) x^2)$ is nonnegative on $[-1, 1]$. 
This nonnegativity condition is equivalent to nonnegativity of $x \mapsto (1+x^2)^{2 d} t\left(\frac{x^2-1}{1 + x^2}\right)$ on $\R$. 
This latter polynomial has coefficients with maximal bit size upper bounded by $\tilde{O}(d)$. 
As a consequence of \cite[Theorem~24]{mag19}, there exists an algorithm  computing an exact SOS decomposition of $(1+x^2)^{2 d} t\left(\frac{x^2-1}{1 + x^2}\right)$ with rational coefficients in $\tilde{O}(d^4)$ {boolean operations}. 
\end{remark}

We now focus on the {lower} bound estimate. 
Let $\varepsilon \in (0,1]$ and $s \in \Sigma[x]_{2d}$  satisfying~\eqref{ineq}.
As discussed above, the following properties hold:

\begin{equation}\label{as:poly_s}
	\begin{cases}
	s(0) = 1 \,, \\
	s(x) \leq 1 \,, \forall x \in [0,1] \,,\\
	s(x) \geq 0 \,, \forall x \in \RR \,. \\
	\end{cases}
\end{equation}

We denote by $S_d$ the set of polynomials of degree $2d$
satisfying~\eqref{as:poly_s}.

\begin{proposition}
\label{prop:max_finite}
For any $d \in \NN$, $\max_{s \in S_d} s(-1)$ is  upper bounded by $1 + 2 d (4 e)^{2d}$.
\end{proposition}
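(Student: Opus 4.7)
The strategy is to expand $s$ in a Taylor series at $x=0$ and bound each derivative via a Markov--Bernstein inequality on $[0,1]$, taking advantage of the constraints $s(0)=1$, $|s(x)|\le 1$ on $[0,1]$ (which follows from $s\ge 0$ on $\RR$ together with $s\le 1$ on $[0,1]$), and $s(-1)\ge 0$ (from global nonnegativity). From $s(0)=1$ one has
\[
|s(-1)| \le 1 + \sum_{k=1}^{2d} \frac{|s^{(k)}(0)|}{k!},
\]
so the task reduces to showing that each of the $2d$ summands is bounded by $(4e)^{2d}$; the inequality on $|s(-1)|$ then transfers to $s(-1)$ by the sign constraint $s(-1)\ge 0$.

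The main ingredient is V.~Markov's sharp inequality for higher derivatives: for $p \in \RR[x]_n$ with $\|p\|_{[-1,1]} \le 1$,
\[
\|p^{(k)}\|_{[-1,1]} \le T_n^{(k)}(1) = \frac{\prod_{i=0}^{k-1}(n^2-i^2)}{(2k-1)!!} \le \frac{n^{2k}}{(2k-1)!!}.
\]
Applying this to the rescaled polynomial $\tilde s(y)=s((y+1)/2)$ of degree $2d$ on $[-1,1]$ and absorbing the factor $2^k$ introduced by the affine change of variable, one obtains $|s^{(k)}(0)| \le (8d^2)^k/(2k-1)!!$. The classical identity $k!(2k-1)!! = (2k)!/2^k$ converts this into
\[
\frac{|s^{(k)}(0)|}{k!} \le \frac{(16 d^2)^k}{(2k)!}.
\]
Stirling's inequality $(2k)! \ge (2k/e)^{2k}$ then simplifies the right-hand side to $(2de/k)^{2k}$. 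A direct analysis of $k\mapsto (2de/k)^{2k}$ on $\{1,\dots,2d\}$ (setting the log-derivative to zero gives the critical point $k=2d$) shows that the maximum is attained at $k=2d$, with value $e^{4d}$. Since $e^2 < 4e$, this is at most $(4e)^{2d}$, and summing the $2d$ terms yields $|s(-1)| \le 1 + 2d \cdot (4e)^{2d}$.

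The main technical hurdle is invoking V.~Markov's refined bound rather than the elementary iterated first-order Markov inequality $\|p^{(k)}\|_{[-1,1]}\le n^{2k}$. The latter would only give $|s^{(k)}(0)|/k!\le (8d^2)^k/k! \le (4de)^{2d}$ at $k=2d$, producing the weaker estimate $\lesssim d^{2d}(4e)^{2d}$. The extra factor $1/(2k-1)!!$ that V.~Markov provides is precisely what kills the spurious $d^{2d}$ and leaves the claimed $(4e)^{2d}$ per term, with the $2d$ prefactor counting the number of nonzero terms and the additive $1$ coming from $s(0)=1$.
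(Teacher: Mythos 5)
Your proof is correct, and it reaches the statement by the same overall decomposition as the paper but with a different key lemma. Both arguments bound the monomial coefficients $s_k=s^{(k)}(0)/k!$ of $s$ using only $0\le s\le 1$ on $[0,1]$ and then estimate the alternating sum $s(-1)=\sum_{k=0}^{2d}(-1)^k s_k$, isolating the term $s_0=s(0)=1$ and bounding the remaining $2d$ coefficients by $(4e)^{2d}$ each. The paper gets the coefficient bound in one line by citing Sherstov's Lemma~4.1, which controls $|s_k|$ by the values of $s$ at the equispaced nodes $j/(2d)$; you instead derive it from V.~Markov's inequality $\|p^{(k)}\|_{[-1,1]}\le T_n^{(k)}(1)\le n^{2k}/(2k-1)!!$ applied to the rescaled polynomial, combined with $k!\,(2k-1)!!=(2k)!/2^k$ and Stirling. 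Your computation checks out: the factor $2^k$ from the affine change of variables gives $|s^{(k)}(0)|\le (8d^2)^k/(2k-1)!!$, hence $|s_k|\le (16d^2)^k/(2k)!\le (2de/k)^{2k}$, and since $k\mapsto(2de/k)^{2k}$ increases up to $k=2d$, each coefficient is at most $e^{4d}\le(4e)^{2d}$. What your route buys is a self-contained argument resting only on a classical inequality, and in fact a slightly sharper per-coefficient constant ($e^{4d}$ rather than $(4e)^{2d}$); what the paper's citation buys is brevity and the observation that only the $2d+1$ discrete values of $s$ on $[0,1]$ are needed. (Minor remark: the step passing from $|s(-1)|$ to $s(-1)$ does not require the sign constraint $s(-1)\ge 0$, since $s(-1)\le|s(-1)|$ holds unconditionally.)
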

\begin{proof}
From \cite[Lemma~4.1]{sh12}, one  has  $|s_k| \leq (4 e)^{2d} \displaystyle\max_{0 \leq j \leq 2 d} s\left(\frac{j}{2d}\right) \leq (4 e)^{2d}$, for all $k \in \{0,\dots,2d\}$ as $s$ is upper bounded by $1$ on $[0, 1]$. 
Thus, $s(-1) = \displaystyle\sum_{k=0}^{2d} (-1)^k s_k \leq 1 + 2 d (4 e)^{2d}$. 
\end{proof}
Proposition \ref{prop:max_finite} immediately implies the {lower} bound estimate from Theorem \ref{th:bounds} since any $s \in \Sigma[x]_{2 d}$ satisfying \eqref{ineq} must in particular satisfy $s(-1) \geq 1/\varepsilon$. \\
\if{
The final result of this section provides a lower bound on the moment-SOS relaxation value at order $d+1$. 
\begin{lemma}
\label{lemma:cvgrate}
For all $\varepsilon \in [0, 1]$ and $d \in \N$, one has $v_{d+1}(\varepsilon) \ge \min \{0, 2^{2d} \varepsilon - 1 \}$. 
\end{lemma}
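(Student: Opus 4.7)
The plan is to exhibit an explicit feasible point of the order-$(d+1)$ SOS relaxation \eqref{eq:SOS_d} attaining the value $v = 2^{2d}\varepsilon - 1$. The regime $\varepsilon \geq 4^{-d}$ gives $\min\{0, 2^{2d}\varepsilon - 1\} = 0$ and is already settled by Proposition~\ref{prop:Paulynomial} (applied with $a = 1$), which yields the exactness $v_{d+1}(\varepsilon) = 0$. So only the case $\varepsilon \in [0, 4^{-d}]$ requires a new argument.

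For that range I would recycle the ansatz from Proposition~\ref{prop:Paulynomial}: set $s(x) = (x-1)^{2d} = ((x-1)^d)^2 \in \Sigma[x]_{2d}$ and
\[
\tilde{p}(x, \varepsilon) := x - v - s(x)\bigl(x + (1-\varepsilon)x^2\bigr),
\]
a polynomial of degree $2(d+1)$. By the discussion preceding Proposition~\ref{prop:Paulynomial}, it suffices to show $\tilde{p}(\cdot, \varepsilon) \geq 0$ on $[-1, 1]$: Fekete's representation~\cite{fekete35} then produces SOS polynomials $q \in \Sigma[x]_{2(d+1)}$ and $r \in \Sigma[x]_{2d}$ with $\tilde{p} = q + r(1-x^2)$, so that $x - v = q(x) + r(x)(1-x^2) + s(x)(x+(1-\varepsilon)x^2)$ is a feasible certificate, yielding $v_{d+1}(\varepsilon) \geq 2^{2d}\varepsilon - 1$.

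The key step is then a monotonicity argument in $\varepsilon$. Differentiating at fixed $x$,
\[
\frac{\partial \tilde{p}}{\partial \varepsilon}(x, \varepsilon) = -2^{2d} + (1-x)^{2d} x^2,
\]
so the main technical ingredient is the pointwise bound $(1-x)^{2d} x^2 \leq 4^d$ on $[-1, 1]$. This is a short calculus exercise: writing $(1-x)^{2d} x^2 = \bigl((1-x)^d |x|\bigr)^2$, the factor $(1-x)^d|x|$ is maximized on $[-1, 1]$ at $x = -1$ with value $2^d$ (the other critical points $x \in \{0, 1/(d+1), 1\}$ yield strictly smaller values). Consequently $\partial_\varepsilon \tilde{p} \leq 0$ on $[-1, 1]$, so $\tilde{p}(x, \cdot)$ is non-increasing in $\varepsilon$ at every fixed $x \in [-1, 1]$.

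Finally I would initialize the monotonicity at the boundary $\varepsilon = 4^{-d}$, where $v = 0$ and $\tilde{p}(\cdot, 4^{-d}) = x - s(x)(x + (1 - 4^{-d})x^2)$ coincides with the left-hand side of \eqref{ineq} at $\varepsilon = 4^{-d}$. Proposition~\ref{prop:Paulynomial} applied with $a = 1$ (where the degree threshold $4^d \geq \varepsilon^{-1}$ is saturated at the boundary) gives $\tilde{p}(\cdot, 4^{-d}) \geq 0$ on $[-1, 1]$. Propagating via the monotonicity in $\varepsilon$ yields $\tilde{p}(\cdot, \varepsilon) \geq \tilde{p}(\cdot, 4^{-d}) \geq 0$ on $[-1, 1]$ for every $\varepsilon \in [0, 4^{-d}]$, closing the plan. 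I expect the only delicate point to be the uniform pointwise bound on $(1-x)^{2d}x^2$; the rest reuses Proposition~\ref{prop:Paulynomial} and Fekete's classical univariate representation.
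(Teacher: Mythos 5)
Your argument is correct, and it rests on the same certificate as the paper's proof: fix $s(x)=(x-1)^{2d}\in\Sigma[x]_{2d}$, aim for the value $v=2^{2d}\varepsilon-1$, reduce to nonnegativity of $x-v-s(x)(x+(1-\varepsilon)x^2)$ on $[-1,1]$, and invoke Fekete's representation to produce $q\in\Sigma[x]_{2(d+1)}$ and $r\in\Sigma[x]_{2d}$ (the degrees do match the order-$(d+1)$ relaxation \eqref{eq:SOS_d}). Where you genuinely differ is in how the pointwise inequality is verified. The paper fixes $\varepsilon$, sets $h(x)=(x-1)^{2d}(1+(1-\varepsilon)x)$, and studies the variations of $h$ in $x$ to bound $g(x)=x(1-h(x))$ from below by $\min\{0,2^{2d}\varepsilon-1\}$; the delicate point there is that $h$ has an interior maximum on $[-1,0]$ exceeding $1$, so one must argue via the endpoint values $h(-1)=2^{2d}\varepsilon$ and $h(0)=1$ together with a sign case distinction. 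You instead fix $x$, differentiate in $\varepsilon$, and reduce everything to the single clean bound $(1-x)^{2d}x^2\le 4^d$ on $[-1,1]$ plus nonnegativity at the anchor $\varepsilon=4^{-d}$, which you import from Proposition~\ref{prop:Paulynomial}. This buys a tidier calculus step (one maximization, attained at $x=-1$) at the cost of leaning on Proposition~\ref{prop:Paulynomial}, whereas the paper's version is self-contained for each $\varepsilon$. One small point to tighten: Proposition~\ref{prop:Paulynomial} is stated for $d$ the \emph{smallest} integer strictly above the threshold $\frac{\log(\varepsilon^{-1})}{2\log 2}$, so to cover the anchor $\varepsilon=4^{-d}$ exactly, and all larger $d$ in your first regime, you should either appeal to the monotonicity $v_{d+1}\ge v_{d_0+1}$ of the hierarchy or, more directly, observe that the proof of that proposition only uses $f(-1)=\varepsilon(1+a)^{2d}-1\ge 0$ and $f(1)\le 0$, which hold (with equality at $x=-1$) whenever $\varepsilon\ge 4^{-d}$ and $a=1$. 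With that remark in place, your proof is complete.
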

\begin{proof}
When $\varepsilon = 0$, one has $v_1(0) = v^\star(0) = -1$ as stated in {Proposition} \ref{prop:finitecvg}. 
Let us assume that $\varepsilon \in (0, 1]$. 
By fixing $s = (x-1)^{2d}$ in \eqref{eq:SOS_d}, one has 
\begin{align*}
v_{d+1}(\varepsilon) \ge \sup & \ v \\
\text{s.t.} & \ x - v  - (x-1)^{2d} (x + z x^2) \ge 0 \,, \quad  \forall x \in [-1,1] \,.
\end{align*}
Hence one has $v_{d+1}(\varepsilon) \geq \min_{x \in [-1,1]} g(x)$ with $g(x) = x [1 - (x-1)^{2d} (1+z x) ] $. 
To prove the desired statement, we will show that $\min\{0, 2^{2d} \varepsilon - 1\}$ is a lower bound of the minimum of $g$ over $[-1, 1]$. 
Let us study the variations of $h(x) = (x-1)^{2d} (1+z x)$. 
Its derivative is  $h'(x) = (x-1)^{2d-1} ((2d+1) z x + 2 d -z )$,  which vanishes at $1$ and $x_d = \frac{z-2d}{z(2d+1)}$. 
Therefore $h$ is nondecreasing on $[-1, x_d]$ and nonincreasing on $[x_d, 1]$. In addition $h(-1) = 2^{2d} \varepsilon$, $h(0) = 1$ and $h(1) = 0$. 
It follows that  $g = x (1-h)$ is nonnegative on $[0, 1]$. 
If $d$ is such that $2^{2d} \varepsilon \leq 1$ then the function $1 - h$ is nonnegative on $[-1, 0]$, thus $g$ is greater than $2^{2d}\varepsilon - 1$ on $[-1, 0]$.  
Otherwise, $g$ is nonnegative on $[-1, 0]$. 
\end{proof}
}\fi

{
We end this section by additional numerical experiments suggested by an anonymous referee.  
The optimal value of the quasi-convex SOS program \eqref{eq:varepsilon_d} can be approximated as closely as desired by applying a suitable bisection scheme on $\varepsilon$.  
Doing so, we provide interval enclosures of the exact value of $\varepsilon_d$, for integer values of $d$ between $2$ and $10$. 
The corresponding results are displayed in Figure \ref{fig:epsd}. 
The dotted curves correspond to the lower and upper bounds derived in Theorem \ref{th:bounds}. 
\begin{figure}[H]
\begin{center}
\begin{tikzpicture}[xscale=.5,yscale=.1]
\draw[->] (-0.2, 0) -- (11, 0) node[right] {$d$};
\draw[->] (0, -0.2) -- (0, 50) node[above] {$\log \varepsilon_d^{-1}$};
\draw plot[mark=x,smooth] coordinates  {(2, 2.0101) (3, 5.357) (4,  8.873) (5, 12.399) (6, 15.9244) (7, 19.449) (8, 22.9754) (9, 26.500) (10, 30.0264) };
\draw[dotted,line width=0.7mm] plot coordinates  {(2, 1.3862943611198906) (3,  2.772588722239781) (4,  4.1588830833596715) (5,  5.545177444479562) (6,  6.931471805599453) (7,  8.317766166719343) (8,  9.704060527839234) (9, 11.090354888959125) (10, 12.476649250079015)};
\draw[dotted, line width=0.7mm] plot coordinates  {(2, 5.46995621235328) (3, 10.931489691805597) (4, 16.109525736808077) (5, 21.169796431278805) (6, 26.165528704197282) (7, 31.120438983226716) (8, 36.047178385293726) (9, 40.95329850015803) (10, 45.843670258054196)};
\draw (2,-2.5) node {$2$};  \draw (2,10pt) -- (2,-10pt);
\draw (3,10pt) -- (3,-10pt);
\draw (4,10pt) -- (4,-10pt);
\draw (5,10pt) -- (5,-10pt);
\draw (6,10pt) -- (6,-10pt);
\draw (7,10pt) -- (7,-10pt);
\draw (8,10pt) -- (8,-10pt);
\draw (9,10pt) -- (9,-10pt);
\draw (10,-2.5) node {$10$}; 
\draw (10,10pt) -- (10,-10pt);
\draw (-0.6,0) node {$0$};  \draw (2pt,0) -- (-2pt,0);
\draw (-0.6,10) node {$10$};  \draw (2pt,10) -- (-2pt,10);
\draw (-0.6,20) node {$20$};  \draw (2pt,20) -- (-2pt,20);
\draw (-0.6,30) node {$30$};  \draw (2pt,30) -- (-2pt,30);
\draw (-0.6,40) node {$40$};  \draw (2pt,40) -- (-2pt,40);
\end{tikzpicture}
\caption{{{Values of $\log \varepsilon_d^{-1}$ obtained by solving  \eqref{eq:varepsilon_d} for values of $d \in \{2,\dots,10\}$. 
}}}
\label{fig:epsd}
\end{center}
\end{figure}
In particular we retrieve an interval enclosure  of $\varepsilon_2 \in [0.1339740, 0.1339750]$ whose exact value, given in Lemma \ref{lemma:eps2}, is $1 - \sqrt{3}/2 \approx 0.1339745$.
We could also numerically certify that $\varepsilon_3 \in [4.712522 \cdot 10^{-3}, 4.7125518 \cdot 10^{-3}]$, which shows that the theoretical upper bound $4.712527\cdot 10^{-3}$ provided in Lemma \ref{lemma:eps3} is actually very close to the true value of $\varepsilon_3$. 
The function $d \mapsto \log \varepsilon_d^{-1}$ has an approximate linear growth rate of 3.5, between the lower and upper bounds from Theorem \ref{th:bounds}. 
The Julia source code implementing these experiments is available online.\footnote{\url{https://homepages.laas.fr/vmagron/files/slowuniv.jl}}
}
{\section{Discussions and further remarks}}
%
In this note we studied a specific SOS decomposition of a parametrized univariate polynomial to design an elementary POP for which the moment-SOS hierarchy shows finite yet arbitrarily slow convergence when the parameter tends to a limit.

{
As suggested by an anonymous referee, we end this section with a discussion about the finite convergence behavior of the moment-SOS hierarchy, related to the saturation and stability properties of the considered quadratic module. 
By definition of the moment-SOS hierarchy, if the quadratic module $\cM(S)$ is saturated then the moment-SOS hierarchy has finite convergence, and $\sup=\max$ is attained in the SOS part, when we reach the finite convergence. 
If $\cM(S)$ is stable, then the moment-SOS hierarchy stabilizes. 
These results can be found in, e.g., \cite[Chapter~10]{marshall08}.

\paragraph{{Saturated and stable quadratic module}}
The univariate case with nonempty interior is essentially the unique case, where these two properties can happen simultaneously, and we provide such an example in our study. 
In particular, the quadratic module $\cM(S_{\varepsilon})$ of problem \eqref{univpara}  is both saturated and stable, for $\varepsilon >0$. 
Saturation has been shown in the proof of {Proposition}~\ref{prop:finitecvg}. 
In addition, one can show that $\cM(S_{\varepsilon})$ is stable by choosing a so-called \textit{natural description} for $K(S_{\varepsilon}) = [0, 1]$ with the generator $h=x(1-x)$. 
For more details about natural descriptions see \cite{marshall08}, especially Proposition~2.7.3 and its proof. 
This notion of stability depends only on the quadratic module, and not on the particular finite system of generators. 
Let us consider a polynomial $p \in \cM(h)$ of degree $d$. 
Since $\cM(S_{\varepsilon})$ is saturated, $p$ is nonnegative on $K(\{h\})$, so admits by \cite{fekete35} a representation $p = \sigma_0 + \sigma_1 h $ with SOS polynomials $\sigma_0 $ of degree $2 \lceil d / 2 \rceil$ and $\sigma_1$ of degree $2 \lceil d / 2 \rceil - 2$.
So every polynomial of degree $d$ in $\cM(\{h\})$ can be represented in $\cM(\{h\})_{2k}$ with $k=\lceil d / 2 \rceil$, thus $\cM({h})=\cM(S_{\varepsilon})$ is stable.
%
%
Moreover, if we fix a univariate quadratic module as above (stable+saturated) and let the objective function vary (keeping the degree bounded) we cannot observe that the order of finite convergence goes to infinity, as this would contradict the stability of the quadratic module.

\if{
In \cite{pow00}, the authors focus on finding similar SOS decompositions associated to the minimization of the polynomial $1+x+\varepsilon$ (parametrized by $\varepsilon > 0$) on $K(S)=[-1, 1]$, where the latter set is encoded as the super-level set of an {odd} power $r$ of $1 -x^2$. 
They prove in \cite[Theorem~7]{pow00} that finite convergence is reached after a number of iterations  proportional to the power $r$ and the reciprocal $1/\varepsilon$ of the parameter. 
In this case, for a given odd value of $r \geq 3$ the semialgebraic set is fixed and the associated  quadratic module is neither saturated nor stable. 
The former is due to the fact that $1+x$ is nonnegative on $K(S)$ but does not belong to the quadratic module $\cM(\{(1-x^2)^r\})$. 
The latter is due to the fact that the family of linear polynomials $(1+x+\varepsilon)_{\revise{\varepsilon} > 0}$ is contained in the module but {has} arbitrary large SOS representations. 
}\fi
\paragraph{Neither saturated nor stable quadratic module}
{In \cite{st96}, Stengle focuses on finding similar SOS decompositions associated to the minimization of the polynomial $1-x^2+\varepsilon$ (parametrized by $\varepsilon > 0$) on $K(S)=[-1, 1]$, where the latter set is encoded as the super-level set of of $(1 -x^2)^3$. 
    Stengle proves in \cite[Theorem~4]{st96} that finite convergence is reached after a number of iterations proportional to the reciprocal square root of the parameter, i.e. $1/\sqrt{\varepsilon}$.
In this case the semialgebraic set is fixed and the associated  quadratic module is neither saturated nor stable. 
The lack of saturation is due to the fact that $1-x^2$ is nonnegative on $K(S)$ but it does not belong to the quadratic module $\cM(\{(1-x^2)^3\})$. 
The lack of stability is due to the fact that the family of  polynomials $(1-x^2+\varepsilon)_{\revise{\varepsilon} > 0}$ is contained in the module but it has arbitrary large SOS representations. }

\paragraph{{Saturated not stable quadratic module}}
As suggested by an anonymous referee the same phenomenon can happen with a saturated quadratic module that is not stable.
To observe the order of finite convergence going to infinity while keeping the semialgebraic set fixed, with an associated quadratic module being saturated, it is necessary to go one dimension higher. 
For instance, Example 3.7 in \cite{netzer10} is a candidate.  
Indeed, let us consider the preordering, i.e., the quadratic module generated by all the products of $S=\{p_1, p_2, p_3, p_4\}$ as in the example, and as objective functions the linear polynomials  $l_a = a_0 + a_1 t_1 + a_2 t_2  \in \R[t_1, t_2]$ parametrized by $a \in \R$ such that $l_a$ is nonnegative on $K(S)$. 
In this case the preordering is saturated, and thus the moment-SOS hierarchy has finite convergence for every objective polynomial. 
But the order of finite convergence is going to infinity, as the family of linear polynomials has representations of unbounded degree by \cite[Theorem~3.5]{netzer10}. 
Moreover, since the semialgebraic set has nonempty interior, $\sup=\max$ in the SOS hierarchy at every order. 
}
%

\paragraph{{Extension to the multivariate case}}
In our case Theorem \ref{th:bounds} shows that finite convergence is reached  
after a number of iterations proportional to the parameter bit size inverse $\log(1/\varepsilon)$. 
Overall the results from {\cite{st96}} imply that for univariate POPs the minimal relaxation order required for finite convergence could be exponential in the bit size of the coefficients involved in the objective function. {We also point out \cite[Theorem~7]{pow00} where the authors consider SOS representations associated to the minimization of $1+x+\varepsilon$ on $K(S)=[-1, 1]$, where the latter set is encoded as the super-level set of an odd power of $1 -x^2$. 
If the power is greater than 3 then they provide upper bounds on the minimal order that are proportional to the reciprocal of the parameter, i.e. $1/\varepsilon$.}
Our complementary study shows that the minimal order can be linear in the bit size of the coefficients involved in the constraints. 
{
Further research could focus on studying quantitatively higher-dimensional cases where the finite convergence order goes to infinity while keeping the semialgebraic set fixed; as suggested by an anonymous referee  \cite[Example 3.7]{netzer10} could be a candidate. 
}
{
The same behavior happens for the following (contrived) multivariate problem built from the univariate one from our paper, e.g., when minimizing the polynomial $x_1 + \dots + x_n$ on the set of constraints $1 - x_i^2 \geq 0, x_i + (1-\varepsilon) x_i^2 \geq 0, i=1,\dots,n$. 
Similar results hold for the bivariate polynomial optimization problem \eqref{poppara} from Section \ref{sec:poppara}, since any SOS representation associated to \eqref{univpara} provides one for \eqref{poppara}. 
Indeed suppose that there exists $d \in \N$ such that such that $x_1  \in \cM(S_{\varepsilon})_{2d}$, that is, $x_1  = q + r (1-x_1^2) + s(x_1 + (1 - \varepsilon) x_1^2)$ for some $q \in \Sigma[x_1]_{2 d}$, and $r,s \in \Sigma[x_1]_{2(d-1)}$. 
Then $x_1  = q + r x_2^2 + (r - (1 - \varepsilon)s) (1 - x_1^2 - x_2^2) + s (1 - \varepsilon + x_1 - (1-\varepsilon) x_2^2)$, so $x_1  \in \cM(S^2_{\varepsilon})_{2d}$ with $S^2_{\varepsilon} = \{\pm 1-x_1^2-x_2^2,  1 - \varepsilon + x_1 - (1-\varepsilon) x_2^2\}$. 
}

\paragraph{{Influence of the number of connected components and solver precision}}
As in {\cite{st96}}, the description of the set of constraints plays a crucial role in the behavior of the moment-SOS hierarchy. 
For $\varepsilon \in (0, 1]$, our considered set of constraints is the interval $[0, 1]$. 
When this interval is described by either the super-level set of the single quadratic polynomial $x(1-x)$ or by the intersection of the ones of $x$ and $1 -x$, the hierarchy immediately converges at order $1$. 
The present work shows that a slight modification of this description, by means of only two quadratic polynomials, can have significant impact on the efficiency of the moment-SOS hierarchy. 
{
What happens here is that the interval defined by the first constraint almost intersects one side of the second constraint (as $\varepsilon$ goes to zero). 
One might think that the convergence behavior tends to be bad when the semialgebraic sets defined by constraints almost intersect. 
Further research efforts should be pursued to investigate how bad the convergence behavior could be when the number of connected components of the feasible set grows.}\\
In addition to slow convergence behaviors, complementary studies such as \cite{wa12} have emphasized that inappropriate constraint descriptions may provide wrong relaxation bounds when relying on  standard double-precision semidefinite solvers. 
A possible remedy consists of using instead a multiple precision semidefinite solver, which, again, might slow down significantly the required computation. \\
Further investigation should focus on means to  appropriately describe POP constraint sets, in order to obtain sound numerical solutions at lower relaxation orders and with lower solver accuracy. 

\section*{Acknowledgments}
The authors thank Pauline Kergus for suggesting the polynomial from  Section~\ref{sec:higher} {and the anonymous referees for their very insightful feedback. 
The authors are also grateful to Michal Ko{\v c}vara and Nando Leijenhorst for their advice regarding the use of the high-precision SDP solvers Loraine and ClusteredLowRankSolver,  respectively.
}
This work benefited from the FastOPF2 grant funded by RTE, the EPOQCS grant funded by the LabEx CIMI (ANR-11-LABX-0040), the HORIZON–MSCA-2023-DN-JD of the European Commission under the Grant Agreement No 101120296 (TENORS), the AI Interdisciplinary Institute ANITI funding, through the French ``Investing for the Future PIA3'' program under the Grant agreement n${}^\circ$ ANR-19-PI3A-0004 as well as the National Research Foundation, Prime Minister’s Office, Singapore under its Campus for Research Excellence and Technological Enterprise (CREATE) programme.
This work was performed within the project
COMPUTE, funded within the QuantERA II Programme that has received funding from the EU’s H2020 research and innovation programme under the GA No 101017733. 
This work was co-funded by the European Union under the project ROBOPROX (reg. no. CZ.02.01.01/00/22 008/0004590). 

%


\bibliographystyle{plain}

\end{document}